\theoremstyle{plain}
\newtheorem{theorem}{Theorem}[section]
\newtheorem{lemma}{Lemma}[section]
\theoremstyle{definition}
\newtheorem{definition}{Definition}[section]
\newtheorem{assumption}{Assumption}[section]
\newtheorem{remark}{Remark}[section]
\newcommand{\R}{\mathbb{R}}
\newcommand{\Rext}{\R\cup\{+\infty\}}
\newcommand{\set}[1]{\left\{#1\right\}}
\newcommand{\norm}[1]{\left\Vert#1\right\Vert}
\newcommand{\norms}[1]{\Vert#1\Vert}
\newcommand{\Eproof}{\hfill $\square$}
\newcommand{\proj}{\mathrm{proj}}
\newcommand{\argmin}{\mathrm{arg}\!\displaystyle\min}
\newcommand{\dom}[1]{\mathrm{dom}(#1)}
\newcommand{\zero}[1]{{\boldsymbol{0}}}
\newcommand{\Xc}{\mathcal{X}}
\newcommand{\Lc}{\mathcal{L}}
\newcommand{\Tc}{\mathcal{T}}
\newcommand{\Nc}{\mathcal{N}}
\newcommand{\Oc}{\mathcal{O}}
\newcommand{\iprod}[1]{\left\langle #1\right\rangle}
\newcommand{\iprods}[1]{\langle #1\rangle}
\newcommand{\BigO}[1]{\mathcal{O}\left(#1\right)}
\newcommand{\beforesubsec}{\vspace{-1.5ex}}
\newcommand{\aftersubsec}{\vspace{-1ex}}
\newcommand{\beforesec}{\vspace{-1.25ex}}
\newcommand{\aftersec}{\vspace{-1ex}}
\title{A Newton Frank-Wolfe Method for Constrained Self-Concordant Minimization}
\author[1]{Deyi Liu}
\author[2]{Volkan Cevher}
\author[1]{Quoc Tran-Dinh}
\affil[1]{Department of Statistics and Operations Research\\
The University of North Carolina at Chapel Hill\\
318 Hanes Hall, Chapel Hill, NC 27599-3260\\
\newline \url{quoctd@email.unc.edu}, \url{deyiliu@live.unc.edu}.}
\affil[2]{Laboratory for Information and Inference Systems, EPFL, Lausanne, Switzerland
\newline \url{volkan.cevher@epfl.ch}.}
\date{}
\begin{document}
\maketitle

\begin{abstract}
\normalsize
We demonstrate how to scalably solve a class of constrained self-concordant minimization problems using linear minimization oracles (LMO) over the constraint set. We prove that the number of LMO calls of our method is nearly the same as that of the Frank-Wolfe method in $L$-smooth case. Specifically, our Newton Frank-Wolfe method uses $\BigO{\varepsilon^{-\nu}}$ LMO's, where $\varepsilon$ is the desired accuracy and $\nu :=  1 + o(1)$. 
In addition, we demonstrate how our algorithm can exploit the improved variants of the LMO-based schemes, including away-steps, to attain linear convergence rates. 
We also provide numerical evidence with portfolio design with the competitive ratio, D-optimal experimental design, and logistic regression with the elastic net where Newton Frank-Wolfe outperforms the state-of-the-art. 

\end{abstract}

\vspace{-2ex}
\beforesec
\section{Introduction}\label{sec:intro}
\aftersec
In this paper, we consider the following constrained convex optimization problem:
\begin{equation}\label{eq:constr_cvx}
f^{\star} := \min_{x\in\Xc}f(x). 
\end{equation}
Among the first-order methods, the Frank-Wolfe (FW), aka conditional gradient method  \cite{Frank1956} has gained tremendous popularity lately due to its scalability and its theoretical guarantees when the objective is $L$-smooth. 

The scalability of FW is mainly due to its main computational primitive, called the linear minimization oracle (LMO):
\begin{equation}\label{eq:lin_oracle}
\Lc_{\Xc}(s) := \argmin_{u\in\Xc} \iprods{s, u}.
\end{equation}
There are many applications, such as latent group lasso and simplex optimization problems where computing the LMO is significantly cheaper as compared to projecting onto the set $\Xc$. 

However, there are many machine learning problems where the objective function involves logarithmic, ridge regularized exponential, and log-determinant functions. These problems so far cannot exploit the rate as well as the scalability of the FW algorithm or its key variants. 

Our work precisely bridges this gap by focusing on objective functions where $f : \R^p\to\R$ is standard self-concordant (see Definition~\ref{de:self_con_def}) and $\Xc$ is a nonempty, compact and convex set in $\R^p$.  We emphasize that the class of self-concordant functions intersects with the class of Lipschitz continuous gradient functions, but they are different. In particular, we assume 
\begin{assumption}\label{ass:self_concor_Lips}
The solution set $\Xc^{\star}$ of \eqref{eq:constr_cvx} is nonempty.
The function $f$ in \eqref{eq:constr_cvx} is standard self-concordant and its Hessian $\nabla^2 f(x)$ is nondegenerate\footnote{This condition can be relaxed to the case $f(x) := g(Ax)$ where $g$ is self-concordant with nondegenerate Hessian, but $f$ may have degenerate Hessian.} for any $x \in \dom{f}$.
$\Xc$ is compact and its LMO defined by \eqref{eq:lin_oracle} can be computed efficiently and accurately.
\end{assumption}

Under Assumption~\ref{ass:self_concor_Lips}, problem \eqref{eq:constr_cvx} covers various applications in  statistical learning and machine learning such as D-optimal design \cite{harman2009approximate,lu2013computing}, minimum-volume enclosing ellipsoid \cite{damla2008linear}, quantum tomography \cite{gross2010quantum}, logistic regression with elastic-net regularization \cite{TranDinh2017d}, portfolio optimization \cite{ryu2014stochastic}, and optimal transport \cite{peyre2019computational}. 

\vspace{0.5ex}
\noindent\textbf{Related work:}
Motivated by the fact that, for many convex sets, including simplex, polytopes, and spectrahedron, computing a linear minimization oracle is much more efficient than evaluating their projection \cite{Hazan2008, Jaggi2013}, various linear oracle-based algorithms have been proposed, see, e.g., \cite{Frank1956, Hazan2008, Jaggi2013,lan2016conditional,lan2016accelerated}. Recently, such approaches are extended to the primal-dual setting in \cite{Yurtsever2015,yurtsever2019conditional}. 

The most classical one is the Frank-Wolfe algorithm proposed in \cite{Frank1956} for minimizing a quadratic function over a polytope.
It has been shown that the convergence rate of this method is  $\BigO{1/t}$ and is tight under the $L$-smoothness assumption, where $t$ is the iteration counter.

Many recent papers have attempted to improve the convergence rate of the Frank-Wolfe algorithm and its variants by imposing further assumptions or exploiting the underlying problem structures.
For instance, \cite{Beck2004} showed a linear convergence of the Frank-Wolfe method under the assumption that $f$ is a quadratic function and the optimal solution $x^{\star}$ is in the interior of $\Xc$.
\cite{guelat1986some} firstly proposed a variant of the Frank-Wolfe method with away-step and proved its linear rate to the optimal value if $f$ is strongly convex, $\Xc$ is a polytope, and the optimal solution $x^{\star}$ is in the interior of $\Xc$.

Recently, \cite{garber2013linearly} and \cite{lacoste2015global} showed that the result of \cite{guelat1986some}  still holds even $x^{\star}$ is on the boundary of $\Xc$.
This can be viewed as the first general global linear convergence result of Frank-Wolfe algorithms.
\cite{garber2015faster} showed that the convergence rate of the Frank-Wolfe algorithm can be accelerated up to $\BigO{1/t^2}$ if $f$ is strongly convex and $\Xc$ is a ``strongly convex set'' (see their definition).

All the results mentioned above rely on the $L$-smooth assumption of the objective function $f$. Moreover, the primal-dual methods \cite{Yurtsever2015,yurtsever2019conditional} suffer in convergence rate since they can only handle the self-concordant function by splitting the objective and then relying on the proximal operator of the self-concordant function.

For the non-$L$-smooth case, the literature is minimal. Notably, \cite{odor2016frank} is the first work, to our knowledge, that proved that the Frank-Wolfe method could converge with $\BigO{1/t}$ rate for the Poisson phase retrieval problem where $f$ is a logarithmic objective. This result relies on a specific simplex structure of the feasible set $\Xc$ and proved that the objective function $f$ is eventually $L$-smooth on $\Xc$.

In addition, \cite{damla2008linear} showed a linear convergence of the Frank-Wolfe method with away-step for the minimum-volume enclosing ellipsoid problem with a log-determinant objective. The algorithms and the analyses in the respective papers exploit the cost function and the structure, but it is not clear how they can handle more general self-concordant objectives. Note that since both objective functions in the aforementioned works are self-concordant, they are covered by our framework in this paper.

\vspace{0.5ex}
\noindent\textbf{Our approach and contribution:}
Our first goal is to tackle an important class of problems \eqref{eq:constr_cvx}, where existing LMO-based methods do not have convergence guarantees. Our results make sense when computing the LMO is cheaper than computing projections. Otherwise, the first-order methods of \cite{Tran-Dinh2013a} can also be applied.

For this purpose, we apply a projected Newton method to solve \eqref{eq:constr_cvx} and use the Frank-Wolfe method in the subproblems to approximate the projected Newton direction. This approach leads to a double-loop algorithm, where the outer loop performs an inexact projected Newton scheme, and the inner loop carries out an adaptive Frank-Wolfe method.

Notice that our algorithm enjoys several additional computational advantages.
When the feasible set $\Xc$ is a polytope, our subproblem becomes minimizing a quadratic function over a polytope.
By the result of \cite{lacoste2015global}, we can use the Frank-Wolfe algorithm with away-steps to attain linear convergence.
Since in the subproblem our objective function is quadratic, the optimal step size at each iteration has a closed-form expression, leading to structure exploiting variants (see Algorithm \ref{alg:A2}).
Finally, our algorithm can enhance Frank-Wolfe-type approaches by using the inexact projected Newton direction. 

To this end, our contribution can be summarized as follows:

\begin{compactitem}
\item[(a)] We propose a double-loop algorithm to solve \eqref{eq:constr_cvx} when $f$ is \textit{self-concordant} (see Definition~\ref{de:self_con_def}) and $\Xc$ is equipped with a \textit{tractable} linear oracle.
The proposed algorithm is self-adaptive, i.e. it does not require tuning for the step-size and accuracy of the subproblem.

\item[(b)] We prove that the gradient and Hessian complexity of our method is $\BigO{\ln\left(\frac{1}{\epsilon}\right)}$, while the LMO complexity is $\BigO{\frac{1}{\epsilon^{\nu}}}$ where $\nu = 1 + \frac{\ln(1-2\beta)}{\ln(\sigma)}$ for some constants $\sigma > 0$ and $\beta > 0$.
When $\beta$ approaches zero, the complexity bound $\BigO{\frac{1}{\epsilon^{\nu}}}$  also approaches $\BigO{\frac{1}{\epsilon}}$ as in the Frank-Wolfe methods for the $L$-smooth case.
\end{compactitem}
To our knowledge, this work is the first in studying LMO-based methods for solving \eqref{eq:constr_cvx} with non-Lipschitz continuous gradient functions on a general convex set $\Xc$.
It also covers the models in \cite{damla2008linear} and \cite{odor2016frank} as special cases, via a completely different approach.

\vspace{0.5ex}
\noindent\textbf{Paper outline:}
The rest of this paper is organized as follows.
Section \ref{sec:prelim_results} recalls some basic notation and preliminaries of self-concordant functions.
Section \ref{sec:main_alg} presents the main algorithm.
Section \ref{sec:convergence_analysis} proves the local linear convergence of our algorithm and gives a rigorous analysis of the total oracle complexity.
Three numerical experiments are given in Section~\ref{sec:experiment}.
Finally, we give the conclusion in Section \ref{sec:conc}.
All the technical proofs are deferred to the supplementary document (Supp. Doc).

\beforesec
\section{Theoretical Background}\label{sec:prelim_results}
\aftersec

\noindent\textbf{Basic notation:}
We work with Euclidean spaces, $\R^p$ and $\R^n$, equipped with standard inner product $\iprods{\cdot,\cdot}$ and Euclidean norm $\norm{\cdot}$.
For a given proper, closed, and convex function $f : \R^p\to\Rext$, $\dom{f} := \set{x\in\R^p \mid f(x) < +\infty}$ denotes the domain of $f$, $\partial{f}$ denotes the subdifferential of $f$, and $f^{\ast}$ is its Fenchel conjugate.
For a symmetric matrix $A\in\R^{n\times n}$, $\lambda_{\max}(A)$ denotes the largest eigenvalue of $A$.  We use $[k]$ to denote the set $\set{1,\cdots,k}$, and $e$ to denote the vector whose elements are $1$s.  
For a vector $u\in \R^p$, $\mathrm{Diag}(u)$ is a $p\times p$ diagonal matrix formed by $u$. 
We also define two nonnegative and monotonically increasing functions $\omega(\tau) := \tau - \ln(1 + \tau)$ for $\tau \in [0, \infty)$ and $\omega_{\ast}(\tau) := -\tau - \ln(1 - \tau)$ for $\tau \in [0,1)$.

\beforesubsec
\subsection{Self-concordant Functions}\label{subsec:self_cond_def}
\aftersubsec
We recall the definition of self-concordant functions introduced in \cite{Nesterov1994} here.

\begin{definition}\label{de:self_con_def}
A three-time continuously differentiable univariate function $\varphi : \R\to\R$ is said to be self-concordant with a parameter $M_{\varphi} \geq 0$ if $\vert\varphi{'''}(\tau)\vert \leq M_{\varphi}\varphi{''}(\tau)^{3/2}$ for all $\tau\in\dom{\varphi}$.
A three-time continuously differentiable function $f : \R^p\to\R$ is said to be self-concordant with a parameter $M_f \geq 0$ if $\varphi(\tau) := f(x + \tau v)$ is self-concordant with the same parameter $M_f$ for any $x\in\dom{f}$ and $v\in\R^p$.
If $M_f = 2$, then we say that $f$ is standard self-concordant.
\end{definition}

Note that any self-concordant function $f$ can be rescaled to the standard form as $\hat{f} := \frac{M_f^2}{4}f$.
When $\dom{f}$ does not contain straight line, $\nabla^2{f}(x)$ is positive definite, and therefore we can define a local norm associated with $f$ together with its dual norm as follows:
\begin{equation*}
\left\{
\begin{array}{l}
\norms{u}_{x} := \big(u^{\top}\nabla^2f(x)u\big)^{1/2}, \\
\norms{u}_{x}^{\ast} := \big(u^{\top}\nabla^2f(x)^{-1}u\big)^{1/2}.
\end{array}
\right.
\end{equation*}
These norms are weighted and satisfy the Cauchy-Schwarz inequality $\iprods{u, v} \leq \norms{u}_x\norms{v}_x^{\ast}$ for $u,v\in\R^p$.

The class of self-concordant functions is sufficiently broad to cover many applications.
It is closed under nonnegative combination and linear transformation.
Any linear and convex quadratic functions are self-concordant.
The function $f_1(x) := -\log(x)$ and $f_2(x) := x\log(x) - \log(x)$ are self-concordant.
For matrices, $f_3(X) := -\log\det(X)$ is also self-concordant, which is widely used in covariance estimation-type problems.
In statistical learning, the regularized logistic regression model with $f_4(x) := \frac{1}{n}\sum_{i=1}^n\log(1 + \exp(-y_ia_i^{\top}x)) + \frac{\mu_f}{2}\norms{x}^2$ and the regularized Poisson regression model with $f_5(x) := \frac{1}{n}\sum_{i=1}^n\left(y_i\exp(\frac{-a_i^{\top}x}{2}) + \exp(\frac{a_i^{\top}x}{2})\right) + \frac{\mu_f}{2}\norms{x}^2$ are both self-concordant.
Note that all the functions introduced above are not $L$-smooth except for $f_4$.
In addition, any strongly convex function with Lipschitz Hessian continuity is also self-concordant.
See \cite{SunTran2017gsc, Ostrovskii2018} for more examples and theoretical results.

\beforesubsec
\subsection{Approximate Solutions}\label{subsec:approx_sols}
\aftersubsec
Since the Hessian $\nabla^2 f(x)$ is nondegenerate, \eqref{eq:constr_cvx} has only one optimal solution $x^{\star}$. 
Moreover, $\nabla^2{f}(x^{\star}) \succ 0$.
Our goal is to design an algorithm to approximate $x^{\star}$ as follows:

\begin{definition}\label{def:inexact_sol}
Given a tolerance $\varepsilon > 0$, we say that $x_{\varepsilon}^{\star}$ is an $\varepsilon$-solution of \eqref{eq:constr_cvx} if
\begin{equation}\label{def:inexact_sol_prob}
\norms{x_{\varepsilon}^{\star} - x^{\star}}_{x^{\star}} \leq \varepsilon.
\end{equation}
\end{definition}
Different from existing Frank-Wolfe methods where an approximate solution $x_{\varepsilon}^{\star}$ is defined by $f(x_{\varepsilon}^{\star}) - f^{\star} \leq \varepsilon$, we define it via a local norm. 
However, we show in Theorem \ref{tm:complex_analysis_obj_value} that these two concepts are related to each other. 

\beforesec
\section{The Proposed Algorithm}\label{sec:main_alg}
\aftersec
Since $f$ in \eqref{eq:constr_cvx} is standard self-concordant, we first approximate it by a quadratic surrogate and apply a projected Newton method to solve \eqref{eq:constr_cvx}.

More precisely, given $x\in\dom{f}\cap\Xc$, the projected Newton method computes a search direction at $x$ by solving the following constrained convex quadratic program:
\begin{equation}\label{eq:sub_problem}
\begin{array}{ll}
T(x) := \displaystyle\argmin_{u\in\Xc}\Big\{ Q_f(u; x) := \iprod{\nabla f(x), u - x} + \frac{1}{2}(u - x)^{\top}\nabla^2f(x)(u - x) \Big\}.
\end{array}
\end{equation}
Since $\nabla^2{f}(x)$ is positive definite by Assumption~\ref{ass:self_concor_Lips}, $T(x)$ is the unique solution of \eqref{eq:sub_problem}.
However, this problem often does not have a closed-form solution, and we need to approximate it up to a given accuracy.
Since we aim at exploiting  LMO of $\Xc$, we apply a Frank-Wolfe scheme to solve \eqref{eq:sub_problem}.
The optimality condition of \eqref{eq:sub_problem} becomes
\begin{equation}\label{eq:opt_cond1}
\iprods{\nabla Q_f(T(x);x), T(x) - u} \leq 0,~~\forall u\in\Xc,
\end{equation}
where $\nabla Q_f(T(x);x) = \nabla{f}(x) + \nabla^2{f}(x)(T(x) - x)$.
Using this optimality condition, we can define an inexact solution of \eqref{eq:sub_problem} as follows:

\begin{definition}\label{def:inexact_sol_sub_prob}
Given a tolerance $\eta > 0$, we say that $T_{\eta}(x)$ is an $\eta$-solution of \eqref{eq:sub_problem} if
\begin{equation}\label{eq:inexact_sol_sub_prob}
\max_{u\in\Xc}\iprods{\nabla Q_f(T_{\eta}(x);x), T_{\eta}(x) - u} \leq \eta^2.
\end{equation}
\end{definition}
The following lemma, whose proof is in Supp. Doc. \ref{apdx:lm:inexact_sol_prop}, shows that the distance between $T(x)$ and $T_{\eta}(x)$ can be bounded by $\eta$. 
Therefore, this justifies the well-definedness of Definition \ref{def:inexact_sol_sub_prob}.

\begin{lemma}\label{lm:inexact_sol_prop}
Let $T_{\eta}(x)$ be an $\eta$-solution defined by Definition \ref{def:inexact_sol_sub_prob} and $T(x)$ be the exact solution of \eqref{eq:sub_problem}. 
Then, it holds that $\norms{T_{\eta}(x) - T(x)}_{x}  \leq \eta$.
\end{lemma}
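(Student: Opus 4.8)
The plan is to exploit the fact that $Q_f(\cdot\,; x)$ is a convex quadratic whose Hessian is exactly $\nabla^2 f(x)$, so that the local norm $\norms{\cdot}_x$ is precisely the quadratic form governing $Q_f$. Abbreviating $H := \nabla^2 f(x)$, $T := T(x)$, and $T_{\eta} := T_{\eta}(x)$, the starting observation is that $\nabla Q_f(u; x) = \nabla f(x) + H(u - x)$ is \emph{affine} in $u$. Consequently the difference of gradients is linear, $\nabla Q_f(T_{\eta}; x) - \nabla Q_f(T; x) = H(T_{\eta} - T)$, which yields the exact identity
\begin{equation*}
\iprods{\nabla Q_f(T_{\eta}; x) - \nabla Q_f(T; x),\ T_{\eta} - T} = (T_{\eta} - T)^{\top} H (T_{\eta} - T) = \norms{T_{\eta} - T}_{x}^2.
\end{equation*}
This replaces the usual strong-convexity inequality by an equality, which is what makes the bound clean.

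Next I would split the left-hand side into its two inner products and bound each separately using that both $T$ and $T_{\eta}$ lie in $\Xc$, so each is an admissible test point in the other's variational inequality. For the first term, applying the $\eta$-solution condition \eqref{eq:inexact_sol_sub_prob} with the choice $u = T \in \Xc$ gives $\iprods{\nabla Q_f(T_{\eta}; x),\ T_{\eta} - T} \leq \eta^2$. For the second term, the exact optimality condition \eqref{eq:opt_cond1} with the choice $u = T_{\eta} \in \Xc$ gives $\iprods{\nabla Q_f(T; x),\ T - T_{\eta}} \leq 0$, equivalently $-\iprods{\nabla Q_f(T; x),\ T_{\eta} - T} \leq 0$.

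Adding the two bounds to the identity above then produces
\begin{equation*}
\norms{T_{\eta} - T}_{x}^2 = \iprods{\nabla Q_f(T_{\eta}; x),\ T_{\eta} - T} - \iprods{\nabla Q_f(T; x),\ T_{\eta} - T} \leq \eta^2 + 0 = \eta^2,
\end{equation*}
and taking square roots gives the claimed estimate $\norms{T_{\eta}(x) - T(x)}_{x} \leq \eta$. I do not expect a genuine obstacle here: the argument is essentially a one-line monotonicity computation for the affine operator $\nabla Q_f(\cdot\,; x)$. The only points requiring care are getting the signs consistent between the two variational inequalities and noting explicitly that the nondegeneracy of $H$ (Assumption~\ref{ass:self_concor_Lips}) is what guarantees $\norms{\cdot}_x$ is a genuine norm, so that the final square root is well defined and $T(x)$ is the unique minimizer.
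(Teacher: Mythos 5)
Your proof is correct and is essentially the paper's own argument: both exploit that $\nabla Q_f(\cdot\,;x)$ is affine with Hessian $\nabla^2 f(x)$, test the $\eta$-solution condition at $u = T(x)$ and the exact optimality condition at $u = T_\eta(x)$, and combine the two inequalities to get $\norms{T_\eta(x) - T(x)}_x^2 \leq \eta^2$. The only cosmetic difference is that you phrase the key identity as monotonicity of the affine gradient map, whereas the paper writes out the expansion $\nabla Q_f(T_\eta(x);x) = \nabla Q_f(T(x);x) + \nabla^2 f(x)(T_\eta(x) - T(x))$ explicitly.
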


Now, we combine our inexact projected Newton scheme and the well-known Frank-Wolfe algorithm to develop a new algorithm as presented in Algorithm~\ref{alg:A1}.


\begin{algorithm}[ht!]\caption{(\textit{FW-Based Projected Newton})}\label{alg:A1}
\begin{algorithmic}
\normalsize
\State\textbf{Inputs:} Input  $\varepsilon > 0$ and  $x^0\in\dom{f}\cap\Xc$.
\State\hspace{2ex}$\bullet$~Choose  $(\beta, \sigma, C) > 0$ such that \eqref{eq:condition_para} holds.
\State\hspace{2ex}$\bullet$~Choose $C_1 \in (0, 0.5)$ and $\delta \in (0, 1)$.
\State\hspace{2ex}$\bullet$~Set $\lambda_{-1} := \frac{\beta}{\sigma}$ and $\eta_{0} := \min\{\frac{\beta}{C}, C_1h^{-1}(\beta)\}$, where 
\State\hspace{3.2ex}$h$ is defined in \eqref{eq:define_h}.
\For{$k := 0,1, \cdots$}
    \vspace{0.25ex}
    \State $z^{k} := \textbf{FW}(\nabla f(x^{k}), \nabla^2f(x^{k})[\cdot], x^{k},\eta_k^2).$
    \vspace{0.5ex}
    \State $d^k := z^k - x^k$ and $\gamma_k := \norms{d^k}_{x^k}$.
    \vspace{0.5ex}
    \If{$\gamma_k + \eta_k \leq h^{-1}(\beta)$ \textbf{or} $\lambda_{k-1} \leq \beta$}
        \vspace{0.5ex}
               \State $\lambda_k := \sigma\lambda_{k-1}$ and    $\eta_{k+1} := \sigma\eta_k$
               \vspace{0.5ex}
               \State $x^{k+1} := x^k + d^k$. (\textbf{full-step})
        \Else
            \State $\lambda_k := \lambda_{k-1}$ and    $\eta_{k+1} := \eta_k$.
        \vspace{0.5ex}
            \State $\alpha_k := \delta(\gamma_k^2 - \eta_k^2)/(\gamma_k^3 + \gamma_k^2 - \eta_k^2\gamma_k)$.
        \vspace{0.5ex}
            \State $x^{k+1} := x^k + \alpha_k d^k$. (\textbf{damped-step})
        \EndIf
        \If{$\lambda_k \leq \varepsilon$}
            \State \textbf{return} $x^{k+1}$
        \EndIf
\EndFor

\end{algorithmic}
\end{algorithm}

\begin{algorithm}[ht!]\caption{(\textit{Customized Frank-Wolfe Subroutine}){\!\!\!}}\label{alg:A2}
\begin{algorithmic} 
\State \textbf{FW}($h, H[\cdot], u^0,\eta$)
\For{$t := 0,1, \cdots T$}
      \State $g_t := h + H(u^t - u^0)$.
      \vspace{0.4ex}
      \State $v^t := \arg\max_{s\in\Xc}\iprod{g^t, u^t - s}$.
      \vspace{0.4ex}
      \State $V_t := \iprod{g^t, u^t - v^t}$.
      \If{$V_t > \eta$}
           \State $\delta_t := \norms{v^t - u^t}^2_H$ and  $\tau_t := \min\left\{1, V_t/\delta_t\right\}$.{\!\!\!}
         \vspace{0.4ex}
             \State $u^{t+1} := (1- \tau_t)u^t + \tau_tv^t$.
        \Else
           \State \textbf{return} $u^t$.
        \EndIf
\EndFor
\end{algorithmic}
\end{algorithm}

Let us make a few remarks on Algorithm~\ref{alg:A1}.

$\bullet$~\textbf{Discussion on structure:}
Algorithm \ref{alg:A1} integrates both damped-step and full-step inexact projected Newton schemes.
First, it performs the damped-step scheme to generate $\set{x^k}$ starting from an initial point $x^0$ that may be far from the optimal solution $x^{\star}$.
Then, once $\norms{x^k - x^{\star}}_{x^{\star}} \leq \beta$ is satisfied, it switches to the full-step scheme.
For the damped-step stage, we will show later that Algorithm \ref{alg:A1} only performs a finite number of iterations.

$\bullet$~\textbf{Discussion on $\lambda_k$:}
The quantity $\lambda_k$ upper bounds $\norms{x^k - x^{\star}}_{x^{\star}}$.
In the damped-step stage, we keep $\lambda_k$ unchanged while in the full-step one, we decrease $\lambda_k$ by a factor of $\sigma \in (0,1)$.
Consequently, $\norms{x^k - x^{\star}}_{x^{\star}}$ will converge linearly to $0$ in the full-step stage (see Theorem \ref{tm:linear_convergence}).

$\bullet$~\textbf{Discussion on $\eta_k$:}
The quantity $\eta_k$ is used to measure $\norms{T(x^k) - z^k}_{x^k}$ (see \eqref{eq:sub_problem} for the definition of $T(x^k)$).
In Algorithm \ref{alg:A1}, $z^k$ is calculated as 
\begin{equation}\label{alg:outer_iter_full_step}
z^{k} := \textbf{FW}(\nabla f(x^{k}), \nabla^2f(x^{k})[\cdot], x^{k},\eta_k^2),
\end{equation}
and can be viewed as an approximate solution of \eqref{eq:sub_problem} at $x = x^k$.
Therefore, $\eta_k$ measures the accuracy for solving the subproblem.
In the damped-step stage, we keep $\eta_k$ as a constant. 
In the full-step one, $\eta_k$ is decreased by a factor of $\sigma \in (0,1)$ at each iteration to guarantee that we get a more accurate projected Newton direction when the algorithm approaches the optimal solution $x^{\star}$.

$\bullet$~\textbf{Discussion on} $\gamma_k + \eta_k \leq h^{-1}(\beta)$:
When $\gamma_k + \eta_k > h^{-1}(\beta)$, we use a damped-step scheme with the step-size $\alpha_k := \frac{\delta(\gamma_k^2 - \eta_k^2)}{\gamma_k^3 + \gamma_k^2 - \eta_k^2\gamma_k}$.
This step-size is derived from Lemma \ref{lm:one_iter_obj_value} in Supp. Doc., and is in $(0, 1)$.
Once $\gamma_k + \eta_k \leq h^{-1}(\beta)$ is satisfied, we move to the full-step stage and no longer use the damped-step one. 
In addition, from Lemma \ref{lm:bound_lambda_by_bar_lambda} in Supp. Doc., we can see that if $\gamma_k + \eta_k \leq h^{-1}(\beta)$, then we have $\norms{x^k - x^{\star}}_{x^{\star}} \leq \beta$, which means that we already find a good initial point for the full-step stage.

$\bullet$~\textbf{Discussion on  the FW subroutine:}
The subroutine $\textbf{FW}(h,H[\cdot], u^0,\eta)$ is a customized Frank-Wolfe method to solve the following convex constrained quadratic program: 
\begin{equation*}
\min_{x \in \Xc} \big\{\psi(x): = \iprod{h, x - u^0} + \tfrac{1}{2}\iprod{H(x - u^0), x - u^0}\big\}.
\end{equation*}
The step size $\tau_t$ in \textbf{FW} is computed via the exact linesearch condition (see \cite{lan2016conditional} for further details):
\begin{equation*}
\tau_t := \argmin_{\alpha \in [0,1]} \left\{\psi(u^t + \alpha(v^t - u^t))\right\}.
\end{equation*}
$\bullet$~\textbf{Discussion on $\nabla^2f(\cdot)$:} 
In practice, we do not need to evaluate the full Hessian $\nabla^2{f}(x^k)$. 
We only need to evaluate the matrix-vector operator $\nabla^2{f}(x^k)v$ for a given direction $v$.
Similarly, the computation of $\gamma_k$ does not incur significant cost.
Indeed, since we have already computed $\nabla^2{f}(x^k)d^k$ in the \textbf{FW}, computing $\gamma_k$ requires only one additional vector inner product $\iprods{\nabla^2{f}(x^k)d^k, d^k}$.

\beforesec
\section{Convergence and Complexity Analysis}\label{sec:convergence_analysis}
\aftersec
Our analysis closely follows the outline below: 
\begin{compactitem}
\item Given $\beta\in (0,1)$, we show that we only need a finite number of damped-steps to reach $x^k$ such that  $\norms{x^k - x^{\star}}_{x^{\star}} \leq \beta$.
\item Once $\norms{x^k - x^{\star}}_{x^{\star}} \leq \beta$ is satisfied, we prove a linear convergence of the full-step projected Newton scheme.
\item We finally estimate the overall linear oracle (LMO) complexity of Algorithm~\ref{alg:A1}.
\end{compactitem}

\beforesubsec
\subsection{Finite Complexity of Damped-Step Stage}\label{subsec:finite_steps}
\aftersubsec




Before we present the main theorem of this section, let us first define a univariate function $h : \R_{+} \to \R_{+}$ as
\begin{equation}\label{eq:define_h}
h(\tau) := \frac{\tau(1 -2\tau + 2\tau^2)}{(1 - 2\tau)(1 - \tau)^2 - \tau^2}.
\end{equation}
The shape of $h$ is shown in Figure \ref{fig:h_and_condition}.
\begin{figure}[ht!]
\vspace{-0ex}
\begin{center}
\hspace{-0ex}\includegraphics[width=0.49\textwidth]{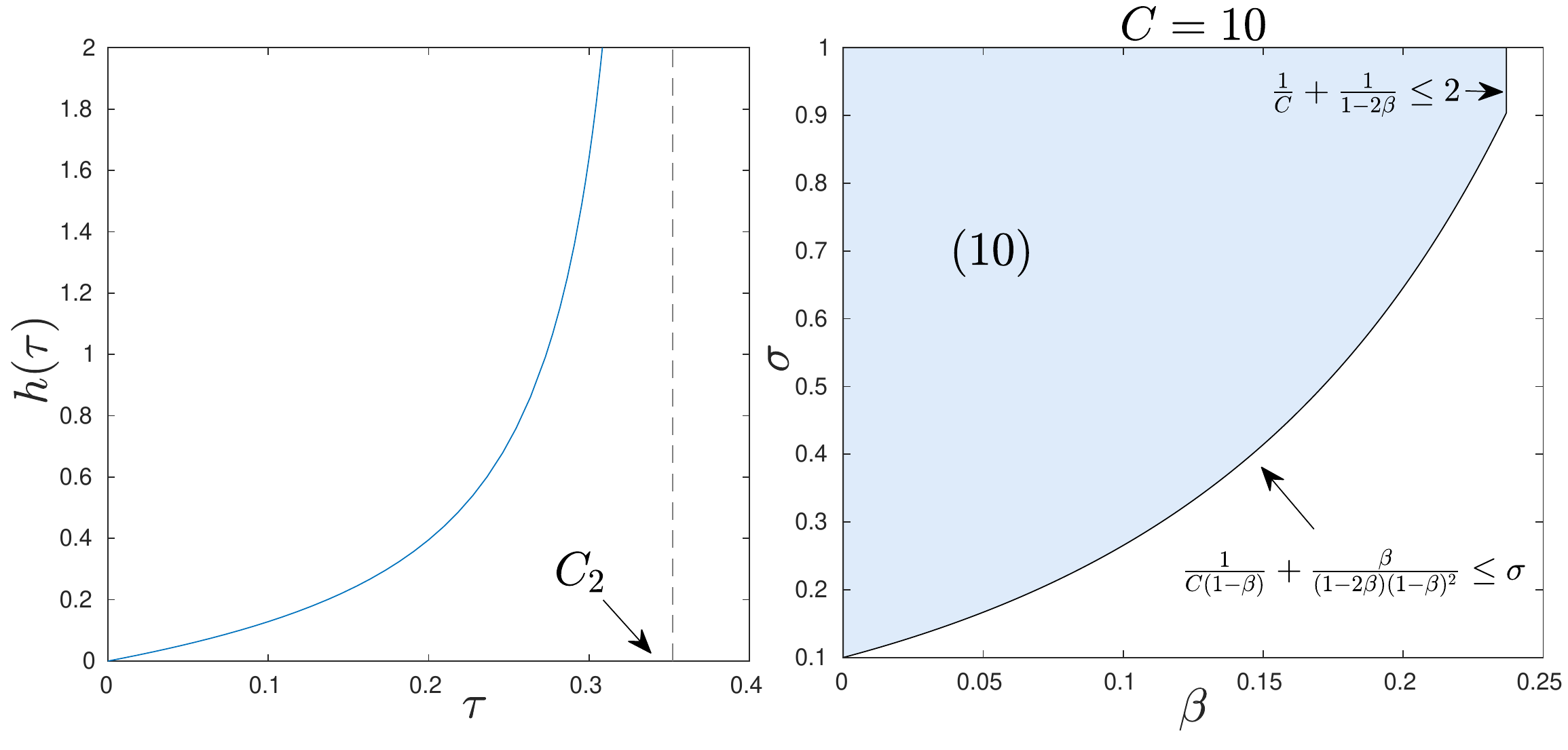}
\vspace{-0ex}
\caption{The shape of $h$ $($left$)$ and the feasible region of $(\beta,\sigma)$ for \eqref{eq:condition_para} when $C = 10$ $($right$)$.}
\label{fig:h_and_condition}
\end{center}  
\vspace{-0ex}
\end{figure} 

From Figure \ref{fig:h_and_condition}, $h$ is nonnegative and monotonically increasing on $[0, C_2)$ for the constant $C_2 \in (0.3, 0.4)$ such that $(1 - 2C_2)(1 - C_2)^2 - C_2^2 = 0$. 


The following theorem, whose proof is in Supp. Doc. \ref{apdx:proof_finite_step_phase1}, states that Algorithm \ref{alg:A1} only needs a finite number of LMO calls $\Tc_1$ to achieve $x^k$ such that $\norms{x^k - x^{\star}}_{x^{\star}} \leq \beta$. 
Although $\Tc_1$ is independent of tolerance $\epsilon$, it depends on the pre-defined constants $(\beta, C_1)$ in the algorithm and the data structure of $(f, \Xc)$. 

\begin{theorem}\label{tm:finite_step_phase1}
Let $\omega(\tau) := \tau - \ln(1 + \tau)$. If we choose the parameters as in Algorithm \ref{alg:A1}, then after at most
\begin{equation}\label{eq:k_max}
K : = \frac{f(x^{0}) - f(x^{\star})}{\delta\omega\left(\frac{1 - 2C_1}{C_1}h^{-1}(\beta)\right)}
\end{equation}
outer iterations of the damped-step scheme, we can guarantee that $\gamma_k + \eta_k \leq h^{-1}(\beta)$ for some $k \in [K]$, which implies that $\norms{x^{k} - x^{\star}}_{x^{\star}} \leq \beta$. 
Moreover, the total number of LMO calls is at most 
\begin{equation*}
\Tc_1 := \frac{6D_{\Xc}^2\lambda_{\max}(\nabla^2 f(x^0))}{(C_1h^{-1}(\beta))^2}\frac{1 - (1-\delta)^{K+1}}{\delta(1-\delta)^K},
\end{equation*}
where $D_{\Xc} := \displaystyle\max_{x,y\in \Xc}\norms{x-y}$.
The number of gradient $\nabla{f}(x^k)$ and Hessian $\nabla^2{f}(x^k)$ evaluations is also $K$.
\end{theorem}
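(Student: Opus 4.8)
The plan is to bound the two quantities in the statement separately: the number of damped (outer) iterations $K$ needed before the entry test ``$\gamma_k+\eta_k\le h^{-1}(\beta)$'' first succeeds, and the cumulative number of inner Frank--Wolfe steps (LMO calls) spent across those outer iterations. The gradient/Hessian count is immediate, since each outer iteration evaluates $\nabla f(x^k)$ once and only uses the operator $\nabla^2 f(x^k)[\cdot]$ (reused inside \textbf{FW}), so it equals the number of outer iterations. Throughout the damped stage I will use that the \textbf{else} branch of Algorithm~\ref{alg:A1} freezes $\eta_k\equiv\eta_0$ and $\lambda_k\equiv\lambda_{-1}$, and that $\eta_0\le C_1 h^{-1}(\beta)$ by construction.

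For the outer count I would invoke the one-step descent estimate (Lemma~\ref{lm:one_iter_obj_value}), which for the damped step-size $\alpha_k=\delta(\gamma_k^2-\eta_k^2)/(\gamma_k^3+\gamma_k^2-\eta_k^2\gamma_k)$ guarantees $f(x^k)-f(x^{k+1})\ge \delta\,\omega(\rho_k)$ for a Newton-type decrement $\rho_k=(\gamma_k^2-\eta_k^2)/\eta_k$. The key observation is that when a damped step is taken the test has failed, so $\gamma_k+\eta_k>h^{-1}(\beta)$; combined with $\eta_k=\eta_0\le C_1 h^{-1}(\beta)$ this forces $\gamma_k>(1-C_1)h^{-1}(\beta)$, hence $\gamma_k^2-\eta_k^2>(1-2C_1)(h^{-1}(\beta))^2$, and dividing by $\eta_k\le C_1 h^{-1}(\beta)$ gives the uniform lower bound $\rho_k\ge \frac{1-2C_1}{C_1}h^{-1}(\beta)$. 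Since $\omega$ is increasing, every damped step lowers $f$ by at least $\delta\,\omega\!\big(\tfrac{1-2C_1}{C_1}h^{-1}(\beta)\big)$; telescoping and using $f(x^k)\ge f(x^\star)$ bounds the number of damped steps by $K$ in \eqref{eq:k_max}. The first time the test succeeds we have $\gamma_k+\eta_k\le h^{-1}(\beta)$, and Lemma~\ref{lm:bound_lambda_by_bar_lambda} converts this into $\norms{x^k-x^\star}_{x^\star}\le\beta$.

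For the LMO count I would first bound the inner cost of a single outer iteration. The subroutine \textbf{FW} (Algorithm~\ref{alg:A2}) is the classical Frank--Wolfe method with exact line search applied to the quadratic $Q_f(\cdot;x^k)$, whose Euclidean smoothness constant is $\lambda_{\max}(\nabla^2 f(x^k))$ and whose feasible diameter is $D_{\Xc}$; the standard $\BigO{1/t}$ Frank--Wolfe gap bound then shows it reaches the stopping gap $\eta_k^2=\eta_0^2$ in at most $\frac{6\,\lambda_{\max}(\nabla^2 f(x^k))\,D_{\Xc}^2}{(C_1 h^{-1}(\beta))^2}$ iterations. It then remains to control how $\lambda_{\max}(\nabla^2 f(x^k))$ grows along the damped iterations. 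Using the self-concordant Hessian-variation inequality $\nabla^2 f(x^{k+1})\preceq (1-\norms{x^{k+1}-x^k}_{x^k})^{-2}\,\nabla^2 f(x^k)$ together with a bound on the local step length $\norms{x^{k+1}-x^k}_{x^k}=\alpha_k\gamma_k$ coming from the explicit step-size, I would establish a per-step growth factor yielding $\lambda_{\max}(\nabla^2 f(x^k))\le (1-\delta)^{-k}\lambda_{\max}(\nabla^2 f(x^0))$. Summing $\sum_{k=0}^{K}(1-\delta)^{-k}=\frac{1-(1-\delta)^{K+1}}{\delta(1-\delta)^{K}}$ over the outer iterations then produces exactly $\Tc_1$.

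The main obstacle is the Hessian-growth control in the third step. One readily checks $\alpha_k\gamma_k<\delta$, but the naive self-concordance estimate only gives a per-step factor $(1-\delta)^{-2}$, whereas the stated geometric series requires the sharper $(1-\delta)^{-1}$, i.e.\ $(1-\alpha_k\gamma_k)^2\ge 1-\delta$, equivalently $\sqrt{1-\delta}\,(\gamma_k^2-\eta_k^2)\le\gamma_k$. Closing this gap is delicate precisely because far from the solution the decrement $\gamma_k$ can be large; it will require exploiting the precise form of $\alpha_k$ (which keeps $\alpha_k\gamma_k$ sufficiently small) and coordinating it with the frozen inexactness level $\eta_0$. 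A secondary, more routine difficulty is making the Frank--Wolfe gap constant explicit and reconciling it with the choice $\eta_0=\min\{\beta/C,\,C_1 h^{-1}(\beta)\}$, which is what leaves the factor $(C_1 h^{-1}(\beta))^{-2}$ in $\Tc_1$.
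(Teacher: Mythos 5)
Your proposal follows the paper's proof essentially step for step: the outer count comes from telescoping the per-iteration decrease $\delta\,\omega(\cdot)$ of Lemma~\ref{lm:one_iter_obj_value} against $f(x^0)-f(x^\star)$, using the failed test $\gamma_k+\eta_k>h^{-1}(\beta)$ to get a uniform lower bound on the decrement; entry into the region $\norms{x^k-x^\star}_{x^\star}\le\beta$ is via Lemma~\ref{lm:bound_lambda_by_bar_lambda}; and the LMO count is the per-outer-iteration bound $6\lambda_{\max}(\nabla^2 f(x^k))D_{\Xc}^2/\eta_k^2$ of Lemma~\ref{lm:bound_sub_solver} summed against a geometric control of $\lambda_{\max}(\nabla^2 f(x^k))$. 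One small discrepancy: the decrement in Lemma~\ref{lm:one_iter_obj_value} is $(\gamma_k^2-\eta_k^2)/\gamma_k$, not $(\gamma_k^2-\eta_k^2)/\eta_k$ as you wrote. Since that quantity is increasing in $\gamma_k$, the lower bound obtained from $\gamma_k>(1-C_1)h^{-1}(\beta)$ is $\tfrac{1-2C_1}{1-C_1}h^{-1}(\beta)$; the paper reaches the stated $\tfrac{1-2C_1}{C_1}h^{-1}(\beta)$ by substituting the lower bound for $\gamma_k$ in the numerator but $\eta_k$ in the denominator, which is not a valid monotonicity argument. Your variant with $\eta_k$ in the denominator would produce the stated constant, but it is not what the lemma proves; either way only the constant in $K$ is affected.

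The obstacle you flag on the Hessian growth is real, and you should not try to prove the sharper per-step factor. The paper closes this step by asserting $\nabla^2 f(x^{k+1})\preceq \nabla^2 f(x^k)/(1-\alpha_k\gamma_k)$ with the \emph{first} power of $(1-\alpha_k\gamma_k)$ in the denominator, which is stronger than what self-concordance provides and contradicts the paper's own inequality \eqref{eq:local_norm_prop1} (and the bound it correctly uses in the proof of Theorem~\ref{tm:complex_analysis}), where the denominator is squared. With the correct estimate $\nabla^2 f(x^{k+1})\preceq(1-\alpha_k\gamma_k)^{-2}\nabla^2 f(x^k)$ and $\alpha_k\gamma_k<\delta$, one gets $\lambda_{\max}(\nabla^2 f(x^k))\le(1-\delta)^{-2k}\lambda_{\max}(\nabla^2 f(x^0))$, and the sum $\sum_{k=0}^{K}(1-\delta)^{-2k}$ replaces $\frac{1-(1-\delta)^{K+1}}{\delta(1-\delta)^{K}}$ by $\frac{1-(1-\delta)^{2(K+1)}}{\delta(2-\delta)(1-\delta)^{2K}}$. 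This changes only the constant in $\Tc_1$ (and turns $e^{R}$ into $e^{2R}$ in the trade-off of Subsection~\ref{subsec:trade_off}), so the qualitative conclusion that the damped stage costs $\BigO{1/\varepsilon}$ LMO calls survives. In short, your reasoning is sound where it stalls; the gap sits in the paper's proof, and the fix is to accept the $(1-\delta)^{-2}$ factor and adjust the stated constant rather than to establish the $(1-\delta)^{-1}$ factor.
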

\begin{remark}\label{le:choice_of_delta}
We  show in Subsection~\ref{subsec:trade_off} how to choose $\delta$ such that the number of LMO calls in Theorem~\ref{tm:finite_step_phase1} is dominated by the one in the full-step stage in Theorem~\ref{tm:complex_analysis}.
\end{remark}

\beforesubsec
\subsection{Linear Convergence of Full-Step Stage}\label{subsec:linear_rate_long_step}
\aftersubsec
Since Theorem~\ref{tm:finite_step_phase1} shows that we only need a finite number of damped-steps to obtain $x^k$ such that $\norms{x^k - x^{\star}}_{x^{\star}} \leq \beta$. 
Therefore, without loss of generality, we always assume that $\norms{x^0 - x^{\star}}_{x^{\star}} \leq \beta$ in the rest of this paper.
Using this assumption,  we analyze  convergence rate of $\set{x^k}$ to the unique optimal solution $x^{\star}$ of \eqref{eq:constr_cvx}. 
In this case, Algorithm \ref{alg:A1} always choose full-steps, i.e., $x^{k+1} := x^k + d^k = z^k$.


The following theorem states a linear convergence of $\norms{x^k - x^{\star}}_{x^{\star}}$ and $\norms{x^{k+1} - x^{k}}_{x^{k}}$. 
The convergence of $\norms{x^{k+1} - x^{k}}_{x^{k}}$ will be used in Theorem \ref{tm:complex_analysis} to bound $\set{\nabla^2 f(x^k)}$ which is key to our LMO complexity analysis. 
The proof can be found in Supp. Doc.~\ref{apdx:proof_linear_convergence}.

\begin{theorem}\label{tm:linear_convergence}
Suppose $\norms{x^0 - x^{\star}}_{x^{\star}} \leq \beta$ and the triple $(\sigma, \beta, C)$ satisfies the following conditions:
\begin{equation}\label{eq:condition_para}
\left\{\begin{array}{l}
  \sigma \in (0,1), ~~\beta \in (0, 0.5), ~~C > 1, \vspace{1ex}\\
  \frac{1}{C(1-\beta)} + \frac{\beta}{(1-2\beta)(1-\beta)^2}\leq \sigma, \vspace{1ex}\\
  \frac{1}{C} + \frac{1}{(1-2\beta)}\leq 2.
\end{array}\right.
\end{equation}
Let $\eta_k := \frac{\beta\sigma^k}{C}$ and $\{x^k\}$ be updated by the full-step scheme in Algorithm \ref{alg:A1}. 
Then, for $k\geq 0$, we have 
\begin{equation*}\label{key_prop3}
\norms{x^k - x^{\star}}_{x^{\star}} \leq \beta\sigma^k ~~\text{and}~~\norms{x^{k+1} - x^{k}}_{x^{k}} \leq 2\beta\sigma^k.
\end{equation*}
\end{theorem}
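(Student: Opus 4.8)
The plan is to prove the first bound $\norms{x^k - x^{\star}}_{x^{\star}} \leq \beta\sigma^k$ by induction on $k$, and then to derive the second bound at each step as a consequence. The base case $k=0$ is exactly the standing hypothesis $\norms{x^0 - x^{\star}}_{x^{\star}} \leq \beta$. In the full-step stage we have $x^{k+1} = z^k$, where $z^k$ is an $\eta_k$-solution of the Newton subproblem \eqref{eq:sub_problem} at $x^k$; by Lemma \ref{lm:inexact_sol_prop} this gives $\norms{z^k - T(x^k)}_{x^k} \leq \eta_k$. Hence the triangle inequality splits the error into an exact projected-Newton term and an inexactness term,
\begin{equation*}
\norms{x^{k+1} - x^{\star}}_{x^{\star}} \leq \norms{z^k - T(x^k)}_{x^{\star}} + \norms{T(x^k) - x^{\star}}_{x^{\star}}.
\end{equation*}

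The heart of the argument is a one-step contraction estimate for the exact projected-Newton map $T$. Writing $r := \norms{x - x^{\star}}_{x^{\star}}$, I would combine the variational inequality \eqref{eq:opt_cond1} for $T(x)$ (tested at $u = x^{\star}$) with the optimality condition of \eqref{eq:constr_cvx} at $x^{\star}$ (tested at $u = T(x)$). Adding the two and rearranging yields $\norms{T(x) - x^{\star}}_x^2 \leq \iprods{\nabla f(x^{\star}) - \nabla f(x) - \nabla^2 f(x)(x^{\star} - x),\, T(x) - x^{\star}}$, so that, after the weighted Cauchy--Schwarz inequality, $\norms{T(x) - x^{\star}}_x \leq \norms{\nabla f(x^{\star}) - \nabla f(x) - \nabla^2 f(x)(x^{\star} - x)}_x^{\ast}$. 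Here I invoke the standard self-concordant inequalities \cite{Nesterov1994}: the gradient-linearization bound $\norms{\nabla f(x^{\star}) - \nabla f(x) - \nabla^2 f(x)(x^{\star} - x)}_x^{\ast} \leq \frac{\norms{x^{\star} - x}_x^2}{1 - \norms{x^{\star} - x}_x}$ together with the Dikin-ellipsoid norm comparison $\norms{u}_x \leq \frac{1}{1-r}\norms{u}_{x^{\star}}$, both valid since $r \leq \beta < 0.5$. These give $\norms{x^{\star} - x}_x \leq \frac{r}{1-r}$ and, after converting back to the reference norm,
\begin{equation*}
\norms{T(x) - x^{\star}}_{x^{\star}} \leq \frac{r^2}{(1-r)^2(1-2r)} =: g(r).
\end{equation*}
The map $r \mapsto g(r)/r = \frac{r}{(1-r)^2(1-2r)}$ is increasing on $(0,0.5)$, which is exactly why the quadratic estimate may be evaluated at the worst case $r = \beta$.

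To close the induction I convert the inexactness term, $\norms{z^k - T(x^k)}_{x^{\star}} \leq \frac{\eta_k}{1-r_k} \leq \frac{\eta_k}{1-\beta}$, and substitute $\eta_k = \beta\sigma^k/C$ and the inductive hypothesis $r_k \leq \beta\sigma^k$. Dividing the resulting bound by $\beta\sigma^k$ reduces the target $\norms{x^{k+1}-x^{\star}}_{x^{\star}} \leq \beta\sigma^{k+1}$ to
\begin{equation*}
\frac{1}{C(1-\beta)} + \frac{g(\beta\sigma^k)}{\beta\sigma^k} \;\leq\; \frac{1}{C(1-\beta)} + \frac{\beta}{(1-2\beta)(1-\beta)^2} \;\leq\; \sigma,
\end{equation*}
which is precisely the second line of \eqref{eq:condition_para}. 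For the second bound I use $\norms{x^{k+1} - x^k}_{x^k} = \gamma_k \leq \eta_k + \norms{T(x^k) - x^k}_{x^k}$, together with the algebraic identity $\frac{r_k}{1-r_k}\bigl(\frac{r_k}{1-2r_k} + 1\bigr) = \frac{r_k}{1-2r_k}$, to obtain $\norms{T(x^k) - x^k}_{x^k} \leq \frac{r_k}{1-2r_k}$; hence $\gamma_k \leq \beta\sigma^k\bigl(\frac{1}{C} + \frac{1}{1-2\beta}\bigr) \leq 2\beta\sigma^k$ by the third line of \eqref{eq:condition_para}.

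The main obstacle I anticipate is the one-step estimate for $g(r)$: making the chain of self-concordant inequalities and norm conversions tight enough that $g(r)/r$ matches exactly the $\frac{\beta}{(1-2\beta)(1-\beta)^2}$ term in \eqref{eq:condition_para}. In particular one must work with the constrained variational-inequality optimality conditions rather than the unconstrained Newton identity, and must carefully track at which base point ($x$, $x^{\star}$, or $T(x)$) each local norm is measured when applying the Dikin-ellipsoid comparisons. Everything else is bookkeeping driven by the three inequalities in \eqref{eq:condition_para}.
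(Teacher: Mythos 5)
Your proposal is correct and follows essentially the same route as the paper: the same triangle-inequality split through $T(x^k)$, the same quadratic contraction bound $\norms{T(x^k)-x^{\star}}_{x^{\star}} \leq \frac{\lambda_k^2}{(1-\lambda_k)^2(1-2\lambda_k)}$ obtained from the self-concordant gradient-linearization estimate and Dikin-ellipsoid norm comparisons, and the same induction closed by the three conditions in \eqref{eq:condition_para}. The only cosmetic difference is that you derive the key intermediate inequality $\norms{T(x)-x^{\star}}_{x} \leq \norms{\nabla f(x^{\star})-\nabla f(x)-\nabla^2 f(x)(x^{\star}-x)}_{x}^{\ast}$ by summing the two variational inequalities directly, whereas the paper phrases the identical argument as nonexpansiveness of the $\nabla^2 f(x^k)$-weighted projection.
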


Theorem~\ref{tm:linear_convergence} shows that $\set{x^k}$ linearly converges to $x^{\star}$ with a contraction factor $\sigma \in (0, 1)$ chosen from \eqref{eq:condition_para}.
Figure \ref{fig:h_and_condition} shows the feasible region of $(\beta, \sigma)$ for $\eqref{eq:condition_para}$ when $C = 10$.
From this figure, we can see that \eqref{eq:condition_para} will always hold once $\beta$ is sufficiently small. 
Therefore, theoretically, we can let $\beta$ arbitrarily close to $0$. 

\beforesubsec
\subsection{Overall LMO Complexity Analysis}\label{subsec:lo_complexity}
\aftersubsec
This subsection focuses on the analysis of LMO complexity of Algorithm~\ref{alg:A1}. 
We first show that Algorithm~\ref{alg:A1} needs $\BigO{\varepsilon^{-2\nu}}$ LMO calls to reach an $\varepsilon$-solution defined by \eqref{def:inexact_sol_prob} where $\nu := 1 + \frac{\ln(1-2\beta)}{\ln(\sigma)}$. 
Consequently,  we can show that it needs $\BigO{\varepsilon^{-\nu}}$-LMO calls  to find an $\varepsilon$-solution $x^{\star}_{\varepsilon}$ such that $f(x^{\star}_{\varepsilon}) - f^{\star} \leq \varepsilon$. 
See Supp. Doc. \ref{apdx:proof_complex_analysis} for details.


\begin{theorem}\label{tm:complex_analysis}
Suppose that $\norms{x^{0} - x^{\star}}_{x^{\star}} \leq \beta$. 
If we choose the parameters $\beta$, $\sigma$, $C$, and $\{\eta_k\}$ as in Theorem \ref{tm:linear_convergence} and update $\set{x^k}$ by the full-steps, then to obtain an $\varepsilon$-solution $x_{\epsilon}^{\star}$ defined by \eqref{def:inexact_sol_prob}, it requires 
\begin{equation*}
\left\{\begin{array}{ll}
\BigO{\ln(\varepsilon^{-1})}~~\text{gradient evaluations $\nabla{f}(x^k)$}, \vspace{1ex}\\
\BigO{\ln(\varepsilon^{-1})}~~\text{Hessian evaluations $\nabla^2{f}(x^k)$, and} \vspace{1ex}\\
\BigO{\varepsilon^{-2\nu}} ~~\text{LMO calls, with $\nu := 1 + \frac{\ln(1-2\beta)}{\ln(\sigma)}$}.
\end{array}\right.
\end{equation*}
\vspace{-1ex}
\end{theorem}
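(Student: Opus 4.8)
The plan is to count separately the outer-loop derivative evaluations and the inner-loop LMO calls, driving both by the two linear-rate estimates of Theorem~\ref{tm:linear_convergence}. For the outer count, recall that in the full-step regime $\lambda_k = \beta\sigma^k$ and $\norms{x^k - x^\star}_{x^\star} \leq \lambda_k$, so the stopping test $\lambda_k \leq \varepsilon$ fires after $K_\varepsilon := \lceil \ln(\beta/\varepsilon)/\ln(1/\sigma)\rceil = \BigO{\ln(\varepsilon^{-1})}$ iterations, at which point $\norms{x^{K_\varepsilon} - x^\star}_{x^\star} \leq \varepsilon$, i.e. $x^{K_\varepsilon}$ is an $\varepsilon$-solution in the sense of \eqref{def:inexact_sol_prob}. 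Since each full step evaluates $\nabla f(x^k)$ and the operator $\nabla^2 f(x^k)[\cdot]$ exactly once, this already yields the claimed $\BigO{\ln(\varepsilon^{-1})}$ gradient and Hessian complexities.

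For the LMO count I would first bound the cost of a single inner solve. At outer iteration $k$ the subroutine \textbf{FW} minimizes the quadratic surrogate with Hessian $H_k = \nabla^2 f(x^k)$ over $\Xc$ and halts once its Frank--Wolfe gap $V_t$ falls below $\eta_k^2$. The curvature constant of this quadratic is at most $\lambda_{\max}(\nabla^2 f(x^k))\,D_{\Xc}^2$, since $(s-x)^\top \nabla^2 f(x^k)(s-x) \leq \lambda_{\max}(\nabla^2 f(x^k))\norms{s-x}^2$ for $s,x\in\Xc$. The standard $\BigO{1/t}$ Frank--Wolfe gap estimate (the same one underlying $\Tc_1$ in Theorem~\ref{tm:finite_step_phase1}) then shows the $k$-th solve terminates within $N_k := \BigO{\lambda_{\max}(\nabla^2 f(x^k))\,D_{\Xc}^2/\eta_k^2}$ LMO calls.

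The crux is to control the growth of $\lambda_{\max}(\nabla^2 f(x^k))$, and this is exactly where the second estimate of Theorem~\ref{tm:linear_convergence} enters. Since $\norms{x^{k+1} - x^k}_{x^k} \leq 2\beta\sigma^k \leq 2\beta < 1$, the standard-self-concordance Hessian comparison gives the one-step bound $\nabla^2 f(x^{k+1}) \preceq (1-2\beta)^{-2}\nabla^2 f(x^k)$; chaining it from $x^0$ and passing to the largest eigenvalue yields $\lambda_{\max}(\nabla^2 f(x^k)) \leq (1-2\beta)^{-2k}\lambda_{\max}(\nabla^2 f(x^0))$. Substituting $\eta_k = \beta\sigma^k/C$ gives $N_k = \BigO{((1-2\beta)\sigma)^{-2k}}$, an increasing geometric sequence, so $\sum_{k=0}^{K_\varepsilon - 1} N_k$ is dominated by its last term and equals $\BigO{((1-2\beta)\sigma)^{-2K_\varepsilon}}$. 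Finally I would insert $\sigma^{K_\varepsilon} = \Theta(\varepsilon)$: this converts $\sigma^{-2K_\varepsilon}$ into $\BigO{\varepsilon^{-2}}$ and $(1-2\beta)^{-2K_\varepsilon}$ into $\BigO{\varepsilon^{-2\ln(1-2\beta)/\ln\sigma}}$, whose product is precisely $\BigO{\varepsilon^{-2\nu}}$ with $\nu = 1 + \ln(1-2\beta)/\ln\sigma$.

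The main obstacle is the Hessian-growth step: one must check that $\beta < 0.5$ keeps $\norms{x^{k+1}-x^k}_{x^k} < 1$ so the self-concordance comparison is valid at every full step, and recognize that it is the crude uniform bound $\norms{x^{k+1}-x^k}_{x^k}\leq 2\beta$ (rather than the sharper $2\beta\sigma^k$) that manufactures the cumulative factor $(1-2\beta)^{-2k}$ and hence the exponent $\nu$. Once that estimate is in place, the geometric summation and the logarithmic conversion $\sigma^{K_\varepsilon}=\Theta(\varepsilon)$ are routine bookkeeping.
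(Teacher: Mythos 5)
Your proposal is correct and follows essentially the same route as the paper: the outer iteration count comes from $\lambda_k=\beta\sigma^k\leq\varepsilon$, the per-iteration LMO bound is the standard Frank--Wolfe gap estimate $\BigO{\lambda_{\max}(\nabla^2 f(x^k))D_{\Xc}^2/\eta_k^2}$ (the paper's Lemma~\ref{lm:bound_sub_solver}), the Hessian growth is controlled by chaining the self-concordance comparison with the uniform bound $\norms{x^{k+1}-x^k}_{x^k}\leq 2\beta$ to get $(1-2\beta)^{-2k}$, and the geometric sum plus the identity $\sigma^{K_\varepsilon}=\Theta(\varepsilon)$ yields the exponent $2\nu$. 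You also correctly pinpoint the one subtle ingredient, namely that it is the crude $2\beta$ bound (valid since $\beta<0.5$ by \eqref{eq:condition_para}) that manufactures the extra $\ln(1-2\beta)/\ln(\sigma)$ in $\nu$.
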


From Theorem~\ref{tm:complex_analysis}, we can observe that a small value of $\beta$ gives a better oracle complexity bound, but increases the number of oracle calls in the damped-step stage.
Hence, we trade-off between the damped-step stage and the full-step stage. 
In practice, we do not recommend to choose an extremely small $\beta$ but some value in the range of $[0.01, 0.1]$. 


Finally, the following theorem states the LMO complexity of Algorithm~\ref{alg:A1} on the objective residuals.

\begin{theorem}\label{tm:complex_analysis_obj_value}
Suppose that $\norms{x^{0} - x^{\star}}_{x^{\star}} \leq \beta$. 
If we choose $\sigma$, $\beta$, $C$, and $\{\eta_k\}$ as in Theorem \ref{tm:linear_convergence} and update $\{x^k\}$ by the full-steps, then we have
\begin{equation*}
f(x^{k+1}) - f(x^{\star}) \leq \left(\frac{12\beta^3}{1-2\beta} + \frac{\beta^2}{C^2} + \beta^2\right)\sigma^{2k}.
\end{equation*}
Consequently, the total LMO complexity of Algorithm~\ref{alg:A1} to achieve an $\epsilon$-solution $x_{\epsilon}^{\star}$ such that $f(x_{\epsilon}^{\star}) - f^{\star} \leq \epsilon$ is $\BigO{\epsilon^{-\nu}}$, where $\nu := 1 + \frac{\ln(1-2\beta)}{\ln(\sigma)}$.
\end{theorem}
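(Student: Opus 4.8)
The plan is to prove the statement in two stages: first establish the per-iteration bound on the objective residual $f(x^{k+1}) - f(x^{\star})$, and then convert it into an LMO-complexity estimate by reduction to Theorem~\ref{tm:complex_analysis}. Throughout I work in the full-step regime, so $x^{k+1} = z^k$ is the $\eta_k$-approximate solution of the subproblem~\eqref{eq:sub_problem} at $x^k$, and I abbreviate $g^k := \nabla f(x^k) + \nabla^2 f(x^k)(x^{k+1} - x^k) = \nabla Q_f(x^{k+1}; x^k)$.

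For the objective bound, I would start from convexity of $f$, which gives $f(x^{k+1}) - f(x^{\star}) \leq \iprods{\nabla f(x^{k+1}), x^{k+1} - x^{\star}}$, and then split the right-hand side as $\iprods{g^k, x^{k+1} - x^{\star}} + \iprods{\nabla f(x^{k+1}) - g^k, x^{k+1} - x^{\star}}$. The first term is controlled directly by the inexactness of the subproblem solve: since $x^{\star} \in \Xc$ and $z^k = x^{k+1}$ satisfies Definition~\ref{def:inexact_sol_sub_prob} (the \textbf{FW} subroutine returns once its gap drops below $\eta_k^2$), taking $u = x^{\star}$ in \eqref{eq:inexact_sol_sub_prob} yields $\iprods{g^k, x^{k+1} - x^{\star}} \leq \eta_k^2 = \frac{\beta^2}{C^2}\sigma^{2k}$, which accounts for the $\frac{\beta^2}{C^2}\sigma^{2k}$ summand. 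The second term is the Newton residual paired against $x^{k+1} - x^{\star}$; I would bound it via the Cauchy--Schwarz inequality in the local norm at $x^k$ together with the standard self-concordant estimate $\norms{\nabla f(x^{k+1}) - g^k}_{x^k}^{\ast} \leq \frac{\norms{x^{k+1} - x^k}_{x^k}^2}{1 - \norms{x^{k+1} - x^k}_{x^k}}$. Feeding in the contraction bounds $\norms{x^{k+1} - x^k}_{x^k} \leq 2\beta\sigma^k$ and $\norms{x^k - x^{\star}}_{x^{\star}} \leq \beta\sigma^k$ from Theorem~\ref{tm:linear_convergence} (the latter converted to the $x^k$-norm through the Dikin-ellipsoid Hessian comparison, valid since $\beta < 0.5$) produces the remaining $\frac{12\beta^3}{1-2\beta}\sigma^{2k}$ and $\beta^2\sigma^{2k}$ summands after bounding $\sigma^{3k} \leq \sigma^{2k}$ and collecting constants.

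For the complexity claim, I would not recount LMO calls from scratch but instead reduce to Theorem~\ref{tm:complex_analysis}. The objective bound just proved shows $f(x^{k+1}) - f^{\star} \leq B\sigma^{2k}$ with $B := \frac{12\beta^3}{1-2\beta} + \frac{\beta^2}{C^2} + \beta^2$, so requiring $f(x^{k+1}) - f^{\star} \leq \epsilon$ is implied once $\beta\sigma^{k} \leq \beta\sqrt{\epsilon/B}$, i.e. once the local-norm accuracy $\norms{x^{k+1} - x^{\star}}_{x^{\star}} \leq \beta\sigma^{k+1}$ reaches order $\sqrt{\epsilon}$. In other words, achieving objective accuracy $\epsilon$ costs no more than reaching an $\varepsilon$-solution in the sense of \eqref{def:inexact_sol_prob} with $\varepsilon = \Theta(\sqrt{\epsilon})$; since the total LMO count proven in Theorem~\ref{tm:complex_analysis} is a function of the final local-norm target alone, substituting $\varepsilon = \Theta(\sqrt{\epsilon})$ into its $\BigO{\varepsilon^{-2\nu}}$ bound gives $\BigO{(\sqrt{\epsilon})^{-2\nu}} = \BigO{\epsilon^{-\nu}}$ with the same exponent $\nu = 1 + \frac{\ln(1-2\beta)}{\ln(\sigma)}$.

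The main obstacle is the first stage: keeping the norm bookkeeping consistent while passing between the local norms at $x^k$, $x^{k+1}$, and $x^{\star}$, and in particular ensuring every Hessian comparison is applied in its region of validity ($\norms{\cdot}_{\cdot} < 1$), which holds here because $\beta < 0.5$ and $\sigma^k \leq 1$. The constant $12$ and the extra $\beta^2$ summand are deliberately loose --- a tighter accounting already closes the argument, so the stated inequality follows a fortiori. The second stage is then essentially bookkeeping once Theorem~\ref{tm:complex_analysis} is in hand; the only point to verify is that the factor-of-two change in the exponent is exactly the square-root relationship between objective residual and local-norm distance exhibited by the first-stage bound.
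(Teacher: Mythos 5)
Your proposal is correct and follows essentially the same route as the paper's proof: the same decomposition of $\iprods{\nabla f(x^{k+1}), x^{k+1}-x^{\star}}$ into the subproblem-gap term (bounded by $\eta_k^2$ via Definition~\ref{def:inexact_sol_sub_prob} with $u = x^{\star}$) and the Newton-residual term (bounded via Cauchy--Schwarz and the self-concordant estimate), followed by the same $\sigma^{2k}\mapsto\sigma^{k}$ square-root bookkeeping for the LMO count. The only differences are cosmetic and harmless: you invoke plain convexity where the paper takes a slight detour through the self-concordance lower bound at $x^{\star}$ plus gradient monotonicity (which is why the paper's bound carries the extra $\omega_{\ast}(\lambda_{k+1})\le\beta^2\sigma^{2k}$ summand that you absorb as slack), and you obtain the $\BigO{\epsilon^{-\nu}}$ count by reduction to Theorem~\ref{tm:complex_analysis} rather than re-summing the per-iteration LMO bounds, which amounts to the same computation.
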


As a concrete example, if we choose $C := 10$ and then the conditions \eqref{eq:condition_para} of Theorem~\ref{tm:linear_convergence} hold if we choose $(\beta, \sigma) = (0.05, 0.1668)$.
In this case, $\nu := 1 + \frac{\ln(1 - 2\beta)}{\ln(\sigma)} = 1.0588$ which is very close to $1$.
The proof of Theorems~\ref{tm:complex_analysis} and \ref{tm:complex_analysis_obj_value} can be found in Supp. Doc. \ref{apdx:proof_complex_analysis} and \ref{apdx:proof_complex_analysis_obj_value}.

\beforesubsec
\subsection{Complexity Trade-off Between Two Stages}\label{subsec:trade_off} 
\aftersubsec
Given a sufficiently small target accuracy $\varepsilon > 0$, our goal is to find $\delta\in(0,1)$ such that the LMO complexity $\Tc_2 := \BigO{\varepsilon^{-2\nu}}$ in Theorem \ref{tm:complex_analysis} dominates $\Tc_1$ in Theorem~\ref{tm:finite_step_phase1}.
Let us choose $\delta := \varepsilon$.
Then, the number of iterations $K$ of the damped-step stage in Theorem~\ref{tm:finite_step_phase1} is $K = \frac{R}{\varepsilon} = \BigO{\frac{1}{\varepsilon}}$, where $R := \frac{f(x^{0}) - f(x^{\star})}{\omega\left(\frac{1 - 2C_1}{C_1}h^{-1}(\beta)\right)}$ is a fixed constant.
Moreover, for sufficiently small $\varepsilon$, we have $(1-\delta)^K = (1-\varepsilon)^{\frac{R}{\varepsilon}} = \BigO{\frac{1}{e^R}}$.
Hence, by Theorem~\ref{tm:finite_step_phase1}, the total LMO calls of the damped-step stage can be bounded by
\begin{equation*}
\Tc_1 := \BigO{\frac{1}{\delta(1-\delta)^K}} = \BigO{\frac{e^R}{\varepsilon }} = \BigO{\frac{1}{\varepsilon}}.
\end{equation*}
We conclude that the LMO complexity $\Tc_2 := \BigO{\varepsilon^{-2\nu}}$ in the full-step stage dominates the one $\Tc_1 = \BigO{\varepsilon^{-1}}$ in the damped-step stage.

\beforesec
\section{Numerical Experiments}\label{sec:experiment}
\aftersec
We provide three numerical examples to illustrate the performance of Algorithm~\ref{alg:A1}.
We emphasize that the objective function $f$ of these examples does not have Lipschitz continuous gradient.
Hence, existing Frank-Wolfe and projected gradient-based methods may not have theoretical guarantees.
In the following experiments, we implement Algorithms~\ref{alg:A1} in Matlab running on a Linux desktop with 3.6GHz Intel Core i7-7700 and 16Gb memory.
Our code is available online at \href{https://github.com/unc-optimization/FWPN}{\color{blue}https://github.com/unc-optimization/FWPN}.

\begin{figure*}[ht!]
\begin{center}
\includegraphics[width=1\textwidth]{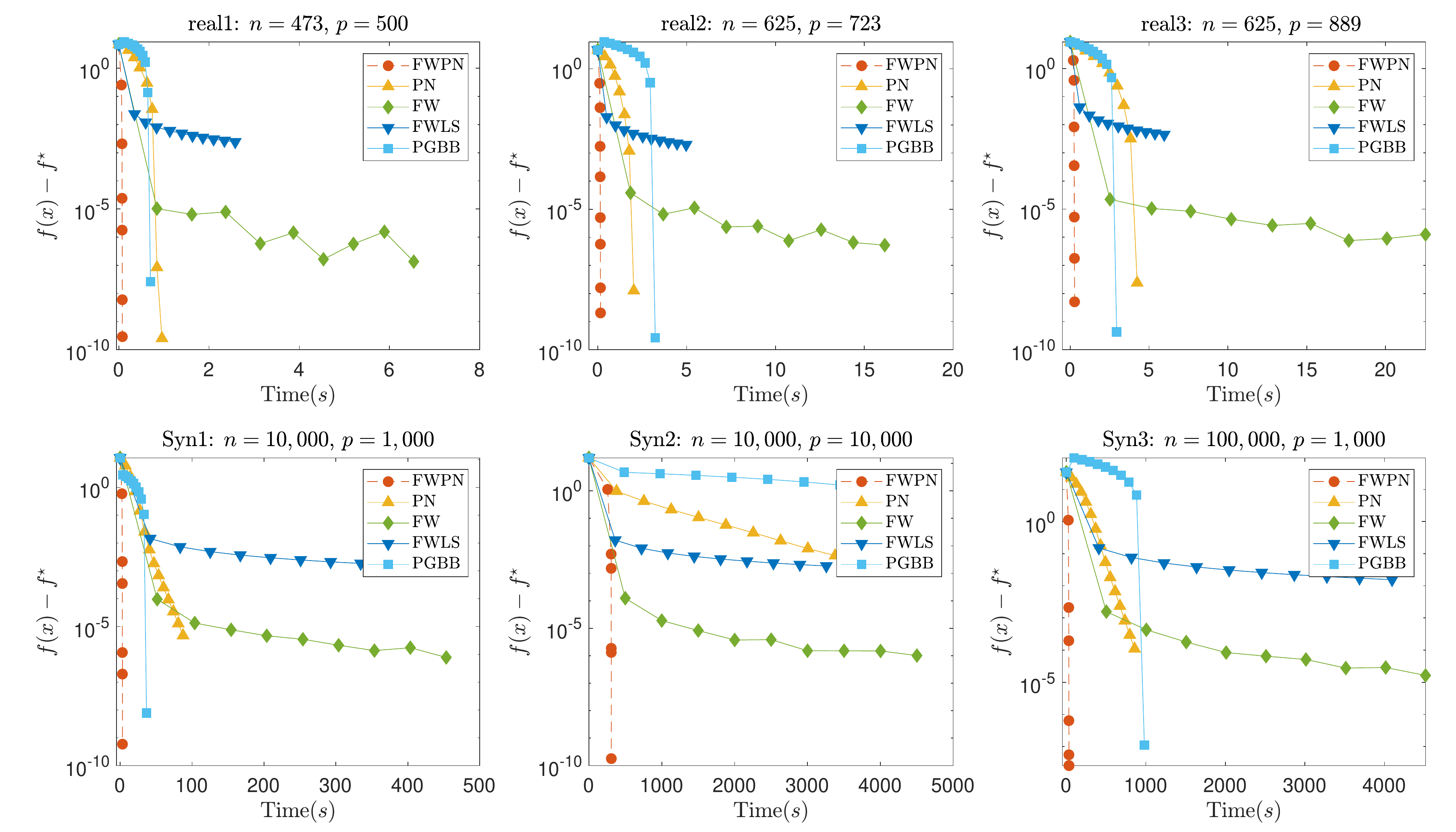}
\caption{A comparison between five methods for solving problem \eqref{eq:portfolio_opt} on six datasets.}
\label{fig:Port}
\end{center}  
\vspace{-2ex}
\end{figure*} 

\beforesubsec
\subsection{Portfolio Optimization}\label{subsec:portfolio_exam}
\aftersubsec
Consider the following portfolio optimization model widely studied in the literature, see e.g., \cite{ryu2014stochastic}:
\begin{equation}\label{eq:portfolio_opt}
\left\{\begin{array}{l}
{\displaystyle\min_{x\in\R^p}} f(x) := -\sum_{i=1}^n\ln(a_i^Tx) \vspace{1ex}\\
\text{s.t.}~~ \sum_{i=1}^px_i = 1, ~x\geq 0,
\end{array}\right.
\end{equation}
where $a_i \in\R^p$ for $i=1,\cdots, n$. 
Let $A  := [a_1,\cdots,a_n]^{\top} \in\R^{n\times p}$.
In the portfolio optimization model, $A_{ij}$ represents the return of stock $j$ in scenario $i$ and $\ln(\cdot)$ is the utility function. 
Our goal is to allocate assets to different stock companies to maximize the expected return.

We implement Algorithm \ref{alg:A1}, abbreviated by FWPN, to solve \eqref{eq:portfolio_opt}.
We also implement the standard projected Newton method which uses accelerated projected gradient method to compute the search direction, the Frank-Wolfe algorithm \cite{Frank1956} and its linesearch variant \cite{Jaggi2013}, and a projected gradient method using Barzilai-Borwein's step-size. 
We name these algorithms by PN, FW, FW-LS, and PG-BB, respectively.

We test these algorithms both on synthetic and real data.  
For the real data, we download three US stock datasets from \href{http://www.excelclout.com/historical-stock-prices-in-excel/}{http://www.excelclout.com/historical-stock-prices-in-excel/}. 
We name these datasets by \texttt{real1}, \texttt{real2}, and \texttt{real3}. 
We generate synthetic datasets as follows. 
We generate a matrix $A$ as $A := \text{ones}(n,p) + \Nc(0,0.1)$ which allows each stock to vary about 10\% among scenarios. 
We test with three examples, where$(n,p)=(10^4,10^3)$, $(10^4,10^4)$, and $(10^5,10^3)$, respectively. 
We call these three datasets \texttt{Syn1}, \texttt{Syn2}, and \texttt{Syn3}, respectively. 
The results and the performance of these five algorithms are shown in Figure \ref{fig:Port}.

From Figure \ref{fig:Port}, one can observe that our algorithm, FWPN, clearly outperforms the other competitors on both real and synthetic datasets. 
In our algorithm, we use a Frank-Wolfe method with away-step to solve the simplex constrained quadratic subproblem which has a linear convergence rate as proved in \cite{lacoste2015global}.
Both PGBB and PG work relatively well compared to other candidates.
As expected, the standard FW and its linesearch variant cannot reach a highly accurate solution.

\beforesubsec
\subsection{\texorpdfstring{$D$}~-Optimal Experimental Design}\label{subsec:D_optimal_design_exam}
\aftersubsec

\begin{figure*}[ht!]
\begin{center}
\includegraphics[width=1\textwidth]{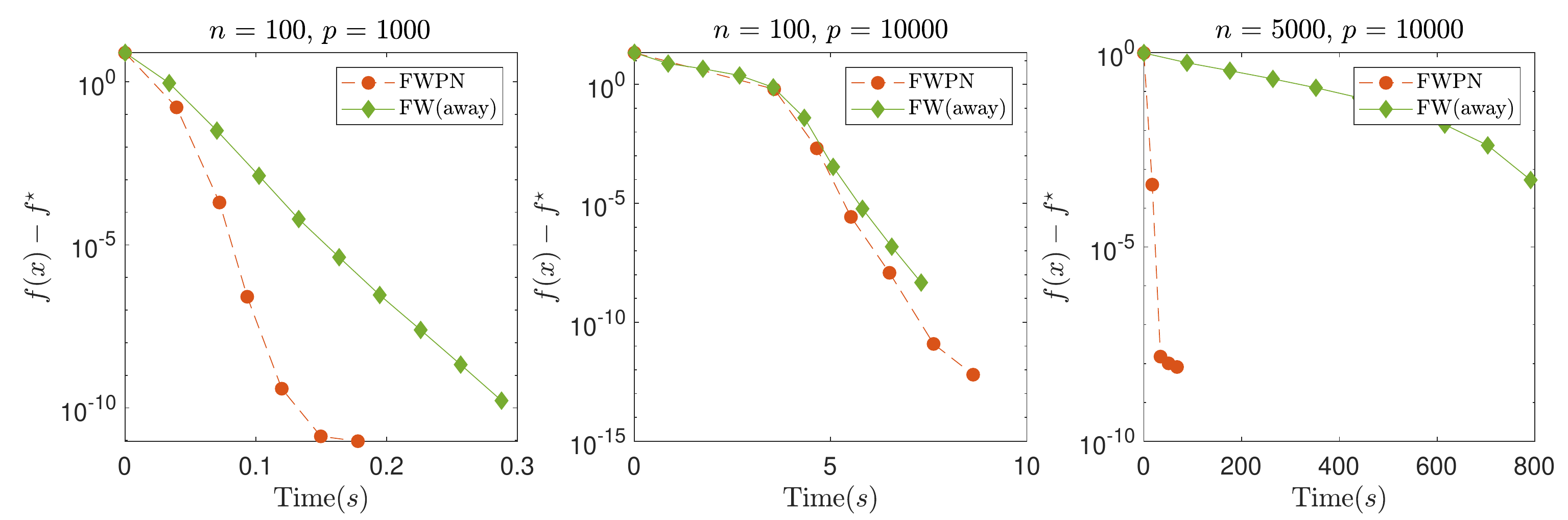}
\caption{A comparison between two algorithms for solving  \eqref{eq:D_optimal_design_exam} on three datasets.}
\label{fig:MVEE}
\end{center}  
\vspace{-2ex}
\end{figure*}

\begin{figure*}[ht!]
\begin{center}
\includegraphics[width=1\textwidth]{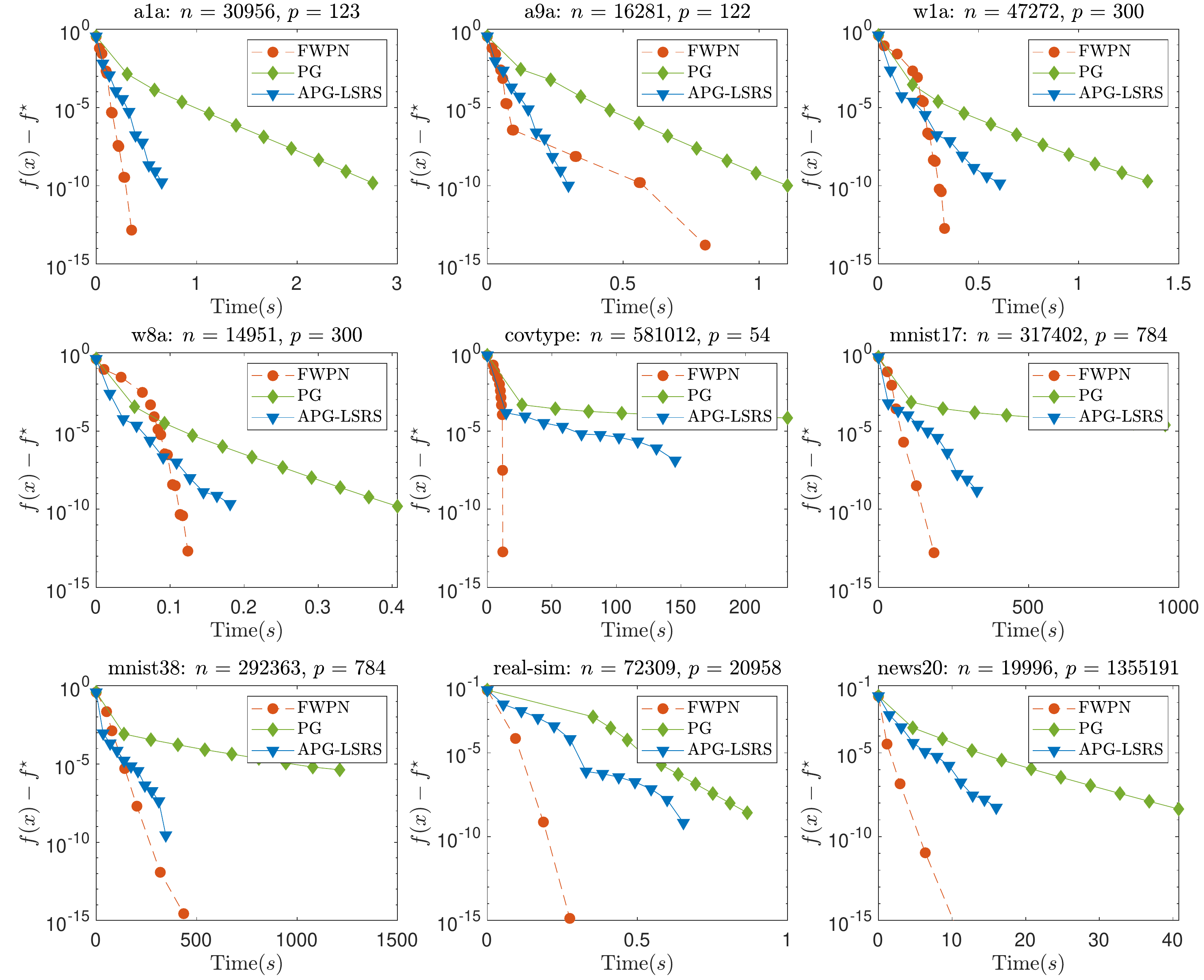}
\caption{A comparison between three methods for solving \eqref{eq:logistic_reg_exam1} on $9$ different datasets.}
\label{fig:logistic}
\end{center}  
\vspace{-3ex}
\end{figure*}  

The second example is the following convex optimization model in $D$-optimal experimental design:
\begin{equation}\label{eq:D_optimal_design_exam}
\left\{\begin{array}{ll}
{\displaystyle\min_{x\in \R^p}} &f(x) := -\log\det(AXA^{\top}) \vspace{1ex}\\
\text{s.t.}      &\sum_{j=1}^px_j = 1, ~x \geq 0,
\end{array}\right.
\end{equation}
where $A := [a_1,\cdots,a_p] \in \mathbb{R}^{n\times p}$, $X := \mathrm{Diag}(x)$, and $a_i \in \R^n$ for $i = 1,\cdots,p$.
It is well-known that the dual problem of \eqref{eq:D_optimal_design_exam} is the minimum-volume enclosing ellipsoid (MVEE) problem:
\begin{equation}\label{eq:MVEE_exam}
\left\{\begin{array}{ll}
{\displaystyle\min_{H \succeq 0}} & g(H) := -\log\det(H) \\
\text{s.t.}      &a_i^{\top}Ha_i \leq n, ~~i = 1,\cdots,p.
\end{array}\right.
\end{equation}
The objective of this problem is to find the minimum ellipsoid that covers the points $a_1, \cdots, a_p \in \mathbb{R}^n$. 
The datasets $\set{a_i}_{i=1}^{p}$ are generated using independent multinomial Gaussian distribution $\Nc(0,\Sigma)$ as in \cite{damla2008linear}. 
For this problem, one state-of-the-art solver is the Frank-Wolfe algorithm with away-step \cite{lacoste2015global}. 
Its attraction is from the observation that the linesearch problem for determining optimal step-size $\tau$:
\begin{equation*}
\min_{\tau \in [0,1]} f((1 - \tau)x + \tau e_j)
\end{equation*}
has a closed-form solution (see \cite{Khachiyan1996} for more details). 
Therefore, we do not have to carry out a linesearch at each iteration of the Frank-Wolfe algorithm. 

Recently, \cite{damla2008linear} showed that the Frank-Wolfe away algorithm has a linear convergence rate for this specific problem. 
Figure \ref{fig:MVEE} reveals the performance of our algorithm (FWPN) and Frank-Wolfe algorithm with away-step on three datasets, where the dimension $n$ varies from $100$ to $5,000$. 
Note that existing literature only tested for problems with $n \leq 500$. 
As far as we are aware of, this is the first attempt to solve problem \eqref{eq:D_optimal_design_exam} with $n$ up to $n = 5,000$. 

Figure \ref{fig:MVEE} shows that when the size of the problem is small, our algorithm is slightly better than the Frank-Wolfe method with away-step. 
However, when the size of the problem becomes large, our algorithm highly outperforms the Frank-Wolfe method in terms of computational time. 
This happens due to a small number of projected Newton steps while each inner iteration requires significantly cheap computational time.

\beforesubsec
\subsection{Logistic Regression with Elastic-net}\label{subsec:logistic_regression_exam}
\aftersubsec

Finally, let us consider the following logistic regression with elastic-net regularizer:
\begin{equation}\label{eq:logistic_reg_exam}
\min_{x\in \R^p} \frac{1}{n}e^{\top}\log(e + \exp(A^{\top}x)) + \frac{\mu}{2}\norms{x}^2 + \rho\norms{x}_1,
\end{equation}
where $e  := (1,1,\cdots, 1)^{\top}\in\R^n$, $A := [-y_1a_1,\cdots,-y_na_n] \in \R^{p\times n}$, and $(a_i, y_i) \in \R^p\times\set{-1,1}$ for $i = 1, \cdots ,n$.

It is well-known that \eqref{eq:logistic_reg_exam} is equivalent to the following problem with a suitable penalty parameter $\rho_1 > 0$:
\begin{equation}\label{eq:logistic_reg_exam1}
{\!\!\!\!}\begin{array}{ll}
\displaystyle\min_{x\in \R^p} &f(x) : = \frac{1}{n}e^{\top}\log(e + \exp(A^{\top}x)) + \frac{\mu}{2}\norms{x}^2\\
\text{s.t.}      &\norms{x}_1 \leq \rho_1.
\end{array}{\!\!\!\!}
\end{equation}
It is has been shown in \cite{SunTran2017gsc} that $f(x) := \frac{1}{n}e^{\top}\log(e + \exp(A^{\top}x)) + \frac{\mu}{2}\norms{x}^2$ is self-concordant.
Therefore, \eqref{eq:logistic_reg_exam1} fits into our template \eqref{eq:constr_cvx} with $\Xc := \set{x\in \R^p \mid \norms{x}_1 \leq \rho_1}$ in this case.

We compare Algorithm~\ref{alg:A1} (FWPN) with a standard proximal-gradient method \cite{Beck2009} and an accelerated proximal-gradient method with linesearch and restart \cite{Becker2011a,Su2014}.
These methods are abbreviated by PG and APG-LSRS, respectively.
We use binary classification datasets: \textbf{a1a}, \textbf{a9a}, \textbf{w1a}, \textbf{w8a}, \textbf{covtype}, \textbf{news20}, \textbf{real-sim} from \cite{CC01a} and generate the datasets \textbf{mnist17} and \textbf{mnist38} from the \textbf{mnist} dataset where digits are chosen from $\set{1,7}$ and $\set{3,8}$, respectively. 
We set $\mu := \frac{1}{n}$ as in \cite{SAGA}, and $\rho_1$ is set to be $10$, which guarantees that the sparsity of the solution is maintained between $1\%$ and $10\%$.

Since we need to evaluate the projection on a $\ell_1$-norm ball at each iteration of PG and APG-LSRS, we use the algorithm provided by \cite{Duchi2008a} which only need $\Oc(p)$ time. 
For our algorithm, since the $\ell_1$-norm ball is still a polytope, we can linearly solve the subproblem by using the Frank-Wolfe algorithm with away-step from \cite{lacoste2015global}. 
The performance and results of three algorithms on the above datasets are presented in Figure \ref{fig:logistic}.

From Figure \ref{fig:logistic}, one can observe that our algorithm outperforms PG and APG-LSRS both on small and large datasets. 
This happens thanks to the low computational cost of the linear oracle and the linear convergence of the FW method with away-step. 
It is interesting that although our algorithm is a hybrid method between second-order and first-order methods, we can still solve high-dimensional problems (e.g., when $p = 1,355,191$ in \textbf{news20} dataset) as often seen in first-order methods. 
We gain this efficiency due to the use of Hessian-vector products instead of full Hessian evaluations.

\beforesec
\section{Conclusion}\label{sec:conc}
\aftersec
\vspace{-0.75ex}
In this paper, we have combined the well-known Frank-Wolfe  (first-order) method and an inexact projected Newton (second-order) method to form a novel hybrid algorithm for solving a class of constrained convex problems under self-concordant structures. 
Our approach is different from existing methods that heavily rely on the Lipschitz continuous gradient assumption. 
Under this new setting, we give the first rigorous convergence and complexity analysis. 
Surprisingly, the LO complexity of our algorithm is still comparable with the Frank-Wolfe algorithms for a different class of problems. 
In addition, our algorithm enjoys several computation advantages on some specific problems, which are also supported by the three numerical examples in Section \ref{sec:experiment}. 
Moreover, the last example has shown that our algorithm still outperforms first-order methods on large-scale instances. 
Our finding suggests that sometimes it is worth carefully combine first-order and second-order methods for solving large-scale problems in non-standard settings. 

\paragraph{Acknowledgments:}
Q. Tran-Dinh was partly supported by the National Science Foundation (NSF), grant No. 1619884 and the Office of Naval Research (ONR), grant No. N00014-20-1-2088.
V. Cevher was partly supported by  the European Research Council (ERC) under the European Union's Horizon 2020 research and innovation program (grant agreement n 725594 - time-data) and  by 2019 Google Faculty Research Award.


\appendix
%
%
\beforesec
\section{Appendix: The proof of technical results}\label{sec:apdx_proofs}
\aftersec
Let us recall the following key properties of standard self-concordant functions.
Let $f$ be standard self-concordant and $x,y \in \dom{f}$ such that $\norms{y - x}_{x} < 1$ and $\norms{y - x}_{y} < 1$.
Then
\begin{equation}\label{eq:local_norm_prop1}
\begin{array}{l}
\left\{
\begin{array}{l}
\big(\norms{u}_{y}\big)^2 = u^{\top}\nabla^2f(y)u \leq u^{\top}\frac{\nabla^2f(x)}{(1-\norms{y-x}_{x})^2}u
= \left(\frac{\norms{u}_{x}}{1-\norms{y-x}_{x}}\right)^2,~~~\forall u\in \R^p,\\
\big(\norms{u}_{y}\big)^2 = u^{\top}\nabla^2f(y)u \leq u^{\top}\frac{\nabla^2f(x)}{(1-\norms{y-x}_{y})^2}u
= \left(\frac{\norms{u}_{x}}{1-\norms{y-x}_{y}}\right)^2,~~~\forall u\in \R^p.
\end{array}
\right.
\end{array}
\end{equation}
These inequalities can be found in \cite{Nesterov2004}[Theorem 4.1.6].

\beforesubsec
\subsection{The Proof of Lemma \ref{lm:inexact_sol_prop}}\label{apdx:lm:inexact_sol_prop}
\aftersubsec
From Definition \ref{def:inexact_sol_sub_prob}, we have $\iprods{\nabla Q_f(T_{\eta}(x); x), T_{\eta}(x) - T(x)} \leq \eta^2$.
Since $Q_f(\cdot;x)$ is a convex quadratic function, it is easy to show that
\begin{equation*}
\begin{array}{ll}
&\iprods{\nabla Q_f(T_{\eta}(x);x), T_{\eta}(x) - T(x)} = \iprods{\nabla Q_f(T(x); x)  +  \nabla^2 f(x)(T_{\eta}(x) - T(x)), T_{\eta}(x) - T(x)} \leq \eta^2.
\end{array}
\end{equation*}
Substituting $T_{\eta}(x)$ for $u$ in the optimality condition \eqref{eq:opt_cond1}, we obtain $\iprods{\nabla{Q}_f(T(x); x), T_{\eta}(x) - T(x)} \geq 0$.
Combining the above two inequalities, we finally get
\begin{equation*}
\iprods{\nabla^2f(x)(T_{\eta}(x) - T(x)), T_{\eta}(x) - T(x)} \leq \eta^2,
\end{equation*}
which is equivalent to $\norms{T_{\eta}(x) - T(x)}_{x} \leq \eta$.
\Eproof

\beforesubsec
\subsection{The Proof of Theorem \ref{tm:finite_step_phase1}}\label{apdx:proof_finite_step_phase1}
\aftersubsec
We would need two Lemmas to prove Theorem \ref{tm:finite_step_phase1}. The following lemma describles the decreasing of objective value when apply damped-step update.

\begin{lemma}\label{lm:one_iter_obj_value}
Let  $\gamma_k := \norms{z^k - x^k}_{x^k}$ be the local distance between $z^k$ to $x^k$, and  $\norms{z^k - T(x^k)}_{x^k} \leq \eta_k$. 
If we choose $\alpha \in (0, 1)$ such that  $\alpha\gamma_k < 1$ and update $x^{k+1} := x^k + \alpha(z^k - x^k)$, then we have
\begin{equation}\label{key_prop6_apdx}
f(x^{k+1}) \leq f(x^{k}) - \left[\alpha( \gamma_k^2 - \eta_k^2) - \omega_{*}(\alpha\gamma_k)\right].
\end{equation}
Assume $\gamma_k > \eta_k$. If $\delta \in (0,1)$ and the step size is $\alpha_k := \frac{\delta(\gamma_k^2 - \eta_k^2)}{\gamma_k(\gamma_k^2 + \gamma_k - \eta_k^2)}$ then we have $\alpha_k\gamma_k < \delta < 1$. 
Moreover, we have
\begin{equation}\label{key_prop7_apdx}
f(x^{k+1}) \leq f(x^{k}) - \delta\omega\left(\frac{\gamma_k^2 - \eta_k^2}{\gamma_k}\right),
\end{equation}
where $\omega(\tau) := \tau - \ln(1 + \tau)$ and $\omega_{\ast}(\tau) := -\tau - \ln(1 - \tau)$ are two nonnegative and convex functions.
\end{lemma}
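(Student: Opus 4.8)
The plan is to derive both inequalities from a single one-dimensional decrease model, obtained by combining the standard self-concordant upper bound with the approximate optimality of $z^k$. First I would invoke the descent-type estimate for standard self-concordant functions, namely $f(y) \leq f(x) + \iprods{\nabla f(x), y-x} + \omega_{\ast}(\norms{y-x}_x)$ whenever $\norms{y-x}_x < 1$ (the companion of \eqref{eq:local_norm_prop1} from \cite{Nesterov2004}). Applying it with $y = x^{k+1} = x^k + \alpha(z^k - x^k)$ and $x = x^k$, and noting $\norms{x^{k+1}-x^k}_{x^k} = \alpha\gamma_k < 1$, gives $f(x^{k+1}) \leq f(x^k) + \alpha\iprods{\nabla f(x^k), z^k - x^k} + \omega_{\ast}(\alpha\gamma_k)$. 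The linear term is controlled by approximate optimality: since $z^k$ is the output of the \textbf{FW} subroutine run to tolerance $\eta_k^2$, its stopping criterion is exactly the $\eta_k$-solution condition of Definition~\ref{def:inexact_sol_sub_prob}, and taking $u = x^k$ there yields $\iprods{\nabla f(x^k) + \nabla^2 f(x^k)(z^k - x^k), z^k - x^k} \leq \eta_k^2$, i.e. $\iprods{\nabla f(x^k), z^k - x^k} \leq \eta_k^2 - \gamma_k^2$. Substituting this in proves \eqref{key_prop6_apdx}.

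For the second claim I would treat the bracket in \eqref{key_prop6_apdx} as a function $\phi(\alpha) := \alpha(\gamma_k^2 - \eta_k^2) - \omega_{\ast}(\alpha\gamma_k)$, which is concave on $\set{\alpha : \alpha\gamma_k < 1}$ with $\phi(0) = 0$. Using $\omega_{\ast}'(\tau) = \tau/(1-\tau)$, the equation $\phi'(\alpha) = 0$ has the unique maximizer $\alpha^{\star} = \frac{\gamma_k^2 - \eta_k^2}{\gamma_k(\gamma_k^2 + \gamma_k - \eta_k^2)}$, so the algorithm's step is exactly the $\delta$-damping $\alpha_k = \delta\alpha^{\star}$. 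Since $\alpha^{\star}\gamma_k = \frac{\gamma_k^2 - \eta_k^2}{\gamma_k^2 + \gamma_k - \eta_k^2} < 1$ (the denominator exceeds the numerator by $\gamma_k > 0$) and $\gamma_k > \eta_k$ makes it positive, we immediately get $\alpha_k\gamma_k = \delta\,\alpha^{\star}\gamma_k < \delta < 1$, which is the first assertion of the second part.

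To obtain \eqref{key_prop7_apdx} I would exploit concavity once more: because $\phi$ is concave with $\phi(0) = 0$ and $\delta \in (0,1)$, we have $\phi(\alpha_k) = \phi(\delta\alpha^{\star}) \geq \delta\phi(\alpha^{\star})$. It then remains to evaluate $\phi$ at its maximizer. Writing $s := \frac{\gamma_k^2 - \eta_k^2}{\gamma_k}$, a short computation shows $\alpha^{\star}\gamma_k = \frac{s}{s+1}$, and substituting collapses the rational expression into $\phi(\alpha^{\star}) = \frac{s^2}{s+1} + \frac{s}{s+1} - \ln(s+1) = s - \ln(1+s) = \omega(s)$. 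Combining with \eqref{key_prop6_apdx} gives $f(x^{k+1}) \leq f(x^k) - \phi(\alpha_k) \leq f(x^k) - \delta\omega\big(\frac{\gamma_k^2 - \eta_k^2}{\gamma_k}\big)$, as desired.

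The hard part is not any single estimate but the exact algebraic identification $\phi(\alpha^{\star}) = \omega(s)$ with $s = (\gamma_k^2 - \eta_k^2)/\gamma_k$; the substitution $\alpha^{\star}\gamma_k = s/(s+1)$ is precisely what makes the messy rational terms telescope into the clean $\omega$. The conceptual point one must \emph{notice} rather than compute is that the prescribed step size is exactly a $\delta$-fraction of the maximizer of the decrease model, so that concavity transfers the optimal decrease $\omega(s)$ down to $\delta\omega(s)$ without distorting its form; everything else is routine self-concordant calculus.
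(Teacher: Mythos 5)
Your proof is correct and follows essentially the same route as the paper: the same self-concordant descent bound combined with the FW stopping criterion at $u = x^k$ gives \eqref{key_prop6_apdx}, and the same function $\alpha \mapsto \alpha(\gamma_k^2-\eta_k^2) - \omega_{\ast}(\alpha\gamma_k)$ is evaluated at $\alpha_k$. The only (cosmetic) difference is in the last step: the paper applies the scalar concavity inequality $\ln(1-\delta x) \geq \delta\ln(1-x)$ to the log term alone, whereas you apply concavity to the whole decrease model via $\phi(\delta\alpha^{\star}) \geq \delta\phi(\alpha^{\star})$ after identifying $\alpha_k$ as $\delta$ times the maximizer --- the two are equivalent since the linear part scales exactly with $\delta$.
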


\begin{proof}
From \eqref{alg:outer_iter_full_step} and the stop criterion of Algorithm \ref{alg:A2}, it is clear that $z^k$ satisfies
\begin{equation*}
\iprod{\nabla f(x^k) + \nabla^2 f(x^k)(z^k -x^k), z^k - x^k} \leq \eta_k^2,
\end{equation*}
which means that $z^k$ is an $\eta_k$-solution of \eqref{eq:sub_problem} at $x = x^k$. 
This inequality leads to
\begin{equation}\label{one_iter_obj_value_est1}
\iprod{\nabla f(x^k), z^k - x^k} \leq \eta_k^2 - \norms{z^k - x^k}_{x^k}^2.
\end{equation}
Therefore, using the self-concordance of $f$ \cite{Nesterov2004}[Theorem 4.1.8], we can derive
\begin{equation}\label{one_iter_obj_value_est2}
\begin{array}{ll}
f(x^{k+1}) &\leq f(x^k) + \iprod{\nabla f(x^k), x^{k+1} - x^k} + \omega_{*}(\norms{x^{k+1} - x^k}_{x^k}) \vspace{1ex}\\
& = f(x^k) + \alpha\iprod{\nabla f(x^k), z^k - x^k} + \omega_{*}(\alpha\norms{z^k - x^k}_{x^k}) \vspace{1ex}\\
& \overset{\eqref{one_iter_obj_value_est1}}{\leq} f(x^k) + \alpha\left(\eta_k^2 - \norms{z^k - x^k}_{x^k}^2\right) + \omega_{*}(\alpha\norms{z^k - x^k}_{x^k}) \vspace{1ex}\\
& = f(x^{k}) - [\alpha(\gamma_k^2 - \eta_k^2) - \omega_{*}(\alpha\gamma_k)].
\end{array}
\end{equation}
This is exactly \eqref{key_prop6_apdx}.

Assume that $\gamma_k^2 > \eta_k^2$. 
Define $\psi(\alpha) := \alpha(\gamma_k^2 - \eta_k^2) - \omega_{*}(\alpha\gamma_k)$ and plug $\alpha_k = \frac{\delta(\gamma_k^2 - \eta_k^2)}{\gamma_k(\gamma_k^2 + \gamma_k - \eta_k^2)}$ into $\psi(\alpha)$, we arrive at
\begin{equation}\label{one_iter_obj_value_est3}
\begin{array}{ll}
\psi(\alpha_k) &= \alpha_k(\gamma_k^2 - \eta_k^2) - \omega_{*}(\alpha_k\gamma_k) \vspace{1ex}\\
& = \alpha_k(\gamma_k^2 - \eta_k^2 + \gamma_k) + \ln(1 - \alpha_k\gamma_k) \vspace{1ex}\\
& = \frac{\delta(\gamma_k^2 - \eta_k^2)}{\gamma_k} + \ln(1 - \frac{\delta(\gamma_k^2 - \eta_k^2)}{\gamma_k^2 - \eta_k^2 + \gamma_k}) \vspace{1ex}\\
& \geq \frac{\delta(\gamma_k^2 - \eta_k^2)}{\gamma_k} + \delta\ln(1 - \frac{(\gamma_k^2 - \eta_k^2)}{\gamma_k^2 - \eta_k^2 + \gamma_k}) \vspace{1ex}\\
& = \delta\omega(\frac{\gamma_k^2 - \eta_k^2}{\gamma_k}),
\end{array}
\end{equation}
where we use $\ln(1 - \delta x) \geq \delta \ln(1 - x)$ in $x \in (0,1)$ for the inequality. Using \eqref{one_iter_obj_value_est2} and \eqref{one_iter_obj_value_est3} we proves \eqref{key_prop7_apdx}.
\end{proof}

The following lemma shows that the residual $\norms{x^{k} - x^{\star}}_{x^{\star}}$ can be bounded by the projected Newton decrement $\bar{\gamma}_k := \norms{T(x^k) - x^k}_{x^k}$.

\begin{lemma}\label{lm:bound_lambda_by_bar_lambda}
Let  $\lambda_k := \norms{x^{k} - x^{\star}}_{x^{\star}}$, $\bar{\gamma}_k := \norms{T(x^k) - x^k}_{x^k}$, $\gamma_k := \norms{z^k - x^k}_{x^k}$, and $h$ be defined by \eqref{eq:define_h}.
If $\gamma_k + \eta_k \in (0, C_2)$, then we have
\begin{equation}\label{key_prop8_apdx}
\lambda_k \leq h(\bar{\gamma}_k) \leq h(\gamma_k + \eta_k).
\end{equation}
\end{lemma}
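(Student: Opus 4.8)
The plan is to prove the two inequalities separately, and the right-hand one is the easy part. Since $z^k$ is an $\eta_k$-solution of the subproblem \eqref{eq:sub_problem} at $x^k$, Lemma~\ref{lm:inexact_sol_prop} gives $\norms{z^k - T(x^k)}_{x^k} \le \eta_k$, so the triangle inequality in $\norms{\cdot}_{x^k}$ yields $\bar{\gamma}_k = \norms{T(x^k) - x^k}_{x^k} \le \norms{T(x^k) - z^k}_{x^k} + \norms{z^k - x^k}_{x^k} \le \gamma_k + \eta_k$. Because $h$ is nonnegative and monotonically increasing on $[0, C_2)$ and the hypothesis places $\gamma_k + \eta_k$ (hence also $\bar{\gamma}_k$) inside $(0, C_2)$, applying $h$ preserves the inequality and gives $h(\bar{\gamma}_k) \le h(\gamma_k + \eta_k)$ at once.

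For the left inequality $\lambda_k \le h(\bar{\gamma}_k)$, I would combine the two available optimality conditions. Writing $T := T(x^k)$, the condition \eqref{eq:opt_cond1} for the exact subproblem solution with $u = x^{\star}$ gives $\iprods{\nabla f(x^k) + \nabla^2 f(x^k)(T - x^k),\, x^{\star} - T} \ge 0$, while the optimality of $x^{\star}$ for \eqref{eq:constr_cvx}, namely $\iprods{\nabla f(x^{\star}), u - x^{\star}} \ge 0$ for all $u \in \Xc$, with $u = T$ gives $\iprods{\nabla f(x^{\star}),\, T - x^{\star}} \ge 0$. Adding these cancels the pure linear terms and leaves a single inequality coupling $\nabla f(x^{\star}) - \nabla f(x^k)$ with the Hessian term $\nabla^2 f(x^k)(T - x^k)$. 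I would then replace the gradient difference by its second-order model, $\nabla f(x^{\star}) - \nabla f(x^k) = \nabla^2 f(x^k)(x^{\star} - x^k) + e_k$, where the standard self-concordance remainder bound (from \cite{Nesterov2004}) controls the error as $\norms{e_k}_{x^k}^{\ast} \le \frac{\norms{x^{\star} - x^k}_{x^k}^2}{1 - \norms{x^{\star} - x^k}_{x^k}}$.

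With this substitution, the plan is to expand $x^{\star} - T = (x^{\star} - x^k) - (T - x^k)$, apply the weighted Cauchy--Schwarz inequality $\iprods{u,v} \le \norms{u}_{x^k}\norms{v}_{x^k}^{\ast}$ and the remainder bound, and use $\norms{x^{\star} - T}_{x^k} \le \norms{x^{\star} - x^k}_{x^k} + \bar{\gamma}_k$, so that everything collapses into a scalar inequality relating $\bar{\gamma}_k$ to $\rho_k := \norms{x^{\star} - x^k}_{x^k}$. The final step is to convert $\rho_k$ into $\lambda_k = \norms{x^{\star} - x^k}_{x^{\star}}$ through the Hessian comparison \eqref{eq:local_norm_prop1}; this is exactly where the $(1-\lambda_k)$ and $(1-2\lambda_k)$-type factors appearing in \eqref{eq:define_h} are produced, and solving the resulting rational inequality for $\lambda_k$ delivers $\lambda_k \le h(\bar{\gamma}_k)$, with $\gamma_k + \eta_k \in (0, C_2)$ guaranteeing $\rho_k < 1$ and the positivity of the denominator of $h$. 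The hard part will be the bookkeeping in this last stage: keeping the cross terms and the two distinct local norms (at $x^k$ and at $x^{\star}$) aligned, and checking that the self-concordant constants recombine into precisely the rational function $h$ of \eqref{eq:define_h} rather than a coarser surrogate.
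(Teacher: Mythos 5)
Your treatment of the second inequality is exactly the paper's: Lemma~\ref{lm:inexact_sol_prop} gives $\bar{\gamma}_k \leq \gamma_k + \eta_k$ and the monotonicity of $h$ on $[0,C_2)$ does the rest. Your opening moves for $\lambda_k \leq h(\bar{\gamma}_k)$ (adding the two optimality conditions) also coincide with the paper's. The divergence, and the gap, lies in where you anchor the Taylor remainder of the gradient. You expand $\nabla f(x^{\star}) - \nabla f(x^k)$ around $x^k$ \emph{evaluated at $x^{\star}$}, so your error $e_k$ is controlled by $\rho_k := \norms{x^{\star}-x^k}_{x^k}$, which is precisely the unknown you are trying to bound. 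Carrying your plan through, the combined optimality condition collapses to $\norms{T(x^k)-x^{\star}}_{x^k}^2 \leq \iprods{e_k, T(x^k)-x^{\star}}$, hence $\norms{T(x^k)-x^{\star}}_{x^k} \leq \rho_k^2/(1-\rho_k)$, and the triangle inequality yields only the \emph{implicit} relation $\rho_k(1-2\rho_k)/(1-\rho_k) \leq \bar{\gamma}_k$. This cannot be solved for $\rho_k$: the left-hand side is not monotone in $\rho_k$ (it vanishes at $\rho_k=0$ and again at $\rho_k = 1/2$, with maximum value $3-2\sqrt{2}\approx 0.17$ attained near $\rho_k\approx 0.29$, and is negative beyond $1/2$), so a tiny $\bar{\gamma}_k$ is equally consistent with $\rho_k$ near $0$ and with $\rho_k \geq 1/2$. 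You would need an a priori bound such as $\rho_k < 1 - 1/\sqrt{2}$ to invert, but the lemma carries no such hypothesis --- it is invoked during the damped-step stage precisely to \emph{certify} that $x^k$ has entered a small neighborhood of $x^{\star}$, so assuming smallness of $\rho_k$ would be circular.

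The paper avoids this by anchoring the remainder at $T(x^k)$: it rewrites the added optimality conditions so that the left-hand side involves $\nabla f(T(x^k)) - \nabla f(x^k) - \nabla^2 f(x^k)(T(x^k)-x^k)$, whose dual norm is bounded by a function of the \emph{known} quantity $\bar{\gamma}_k$ alone, and lower-bounds the right-hand side by the self-concordant strong-monotonicity estimate $\iprods{\nabla f(T(x^k)) - \nabla f(x^{\star}), T(x^k)-x^{\star}} \geq \norms{T(x^k)-x^{\star}}_{T(x^k)}^2 / (1+\norms{T(x^k)-x^{\star}}_{T(x^k)})$. The only inversion then required is of the monotone bijection $t\mapsto t/(1+t)$, giving the explicit bound $\norms{T(x^k)-x^{\star}}_{T(x^k)} \leq \bar{\gamma}_k^2/(1-2\bar{\gamma}_k)$; two norm conversions via \eqref{eq:local_norm_prop1} then produce exactly the rational function $h$. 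To repair your argument you must either supply and justify an a priori bound on $\rho_k$ (which the intended use of the lemma forbids) or switch the anchor point of the Taylor remainder as the paper does.
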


\begin{proof}
Firstly, we can write down the optimality condition of \eqref{eq:sub_problem} and \eqref{eq:constr_cvx}, respectively as follows:
\begin{equation*}
\left\{
\begin{array}{l}
  \iprod{\nabla f(x^k) + \nabla^2 f(x^k)[T(x^k) - x^k], x - T(x^k)} \geq 0,~~~\forall x \in \Xc, \vspace{1ex}\\
  \iprod{\nabla f(x^{\star}), x - x^{\star}} \geq 0,~~~\forall x \in \Xc.
\end{array}
\right.
\end{equation*}
Substituting $x^{\star}$ for $x$ into the first inequality and $T(x^k)$ for $x$ into the second inequality, respectively we get
\begin{equation*}
\left\{
\begin{array}{l}
  \iprod{\nabla f(x^k) + \nabla^2 f(x^k)[T(x^k) - x^k], x^{\star} - T(x^k)} \geq 0, \vspace{1ex}\\
  \iprod{\nabla f(x^{\star}), T(x^k) - x^{\star}} \geq 0.
\end{array}
\right.
\end{equation*}
Adding up both inequalities yields 
\begin{equation*}
\iprod{\nabla f(x^{\star}) - \nabla f(x^k) - \nabla^2 f(x^k)[T(x^k) - x^k], T(x^k) - x^{\star}} \geq 0,
\end{equation*}
which is equivalent to
\begin{equation*}
\iprod{\nabla f(T(x^k)) - \nabla f(x^k) - \nabla^2 f(x^k)[T(x^k) - x^k], T(x^k) - x^{\star}} \geq \iprod{\nabla f(T(x^k)) - \nabla f(x^{\star}) , T(x^k) - x^{\star}}.
\end{equation*}
Since $f$ is self-concordant, by \cite{Nesterov2004}[Theorem 4.1.7], we have 
\begin{equation*}
\iprod{\nabla f(T(x^k)) - \nabla f(x^{\star}) , T(x^k) - x^{\star}} \geq \frac{\norms{T(x^k) - x^{\star}}_{T(x^k)}^2}{1 + \norms{T(x^k) - x^{\star}}_{T(x^k)}}.
\end{equation*}
By the Cauchy-Schwarz inequality, this estimate leads to
\begin{equation}\label{bound_lambda_by_bar_lambda_est3}
\frac{\norms{T(x^k) - x^{\star}}_{T(x^k)}}{1 + \norms{T(x^k) - x^{\star}}_{T(x^k)}} \leq \norms{\nabla f(T(x^k)) - \nabla f(x^k) - \nabla^2 f(x^k)[T(x^k) - x^k]}_{T(x^k)}^*.
\end{equation}
Now, we can bound the right-hand side of the above inequality as
\begin{equation}\label{bound_lambda_by_bar_lambda_est4}
\begin{array}{ll}
\mathcal{R} &:= \norms{\nabla f(T(x^k)) - \nabla f(x^k) - \nabla^2 f(x^k)[T(x^k) - x^k]}_{T(x^k)}^* \vspace{1ex}\\
& \overset{\eqref{eq:local_norm_prop1}}{\leq} \frac{\norms{\nabla f(T(x^k)) - \nabla f(x^k) - \nabla^2 f(x^k)[T(x^k) - x^k]}_{x^k}^*}{1 - \norms{T(x^k) - x^k}_{x^k}} \vspace{1ex}\\
& \leq \left(\frac{\norms{T(x^k) - x^k}_{x^k}}{1 - \norms{T(x^k) - x^k}_{x^k}}\right)^2, 
\end{array}
\end{equation}
where the last inequality is from  \cite{TranDinh2016c}[Theorem 1].
From \eqref{bound_lambda_by_bar_lambda_est3} and \eqref{bound_lambda_by_bar_lambda_est4}, we have
\begin{equation*}
\frac{\norms{T(x^k) - x^{\star}}_{T(x^k)}}{1 + \norms{T(x^k) - x^{\star}}_{T(x^k)}} \leq \left(\frac{\norms{T(x^k) - x^k}_{x^k}}{1 - \norms{T(x^k) - x^k}_{x^k}}\right)^2,
\end{equation*}
which can be reformulated as
\begin{equation}\label{bound_lambda_by_bar_lambda_est1}
\norms{T(x^k) - x^{\star}}_{T(x^k)} \leq \frac{\norms{T(x^k) - x^k}_{x^k}^2}{1 - 2\norms{T(x^k) - x^k}_{x^k}}.
\end{equation}
Next, since we want to use $\norms{T(x^k) - x^k}_{x^k}$ to bound $\norms{x^k - x^{\star}}_{x^{k}}$, we can derive 
\begin{equation}\label{bound_lambda_by_bar_lambda_est2}
\begin{array}{ll}
\norms{x^k - x^{\star}}_{x^{k}} &\leq \norms{x^k - T(x^k)}_{x^k} + \norms{T(x^k) - x^{\star}}_{x^k} \vspace{1ex}\\
&\overset{\eqref{eq:local_norm_prop1}}{\leq} \norms{x^k - T(x^k)}_{x^k} + \frac{\norms{T(x^k) - x^{\star}}_{T(x^k)}}{1 - \norms{x^k - T(x^k)}_{x^k}} \vspace{1ex}\\
&\overset{\eqref{bound_lambda_by_bar_lambda_est1}}{\leq}  \norms{x^k - T(x^k)}_{x^k} + \frac{\norms{x^k - T(x^k)}_{x^k}^2}{(1 - 2\norms{x^k - T(x^k)}_{x^k})(1 - \norms{x^k - T(x^k)}_{x^k})} \vspace{1ex}\\
&= \bar{\gamma}_k + \frac{\bar{\gamma}_k^2}{(1 - 2\bar{\gamma}_k)(1 - \bar{\gamma}_k)}.
\end{array}
\end{equation}
Notice that $h$ is monotonically increasing and $\bar{\gamma_k} \leq \gamma_k + \eta_k$, we finally get
\begin{equation*}
\begin{array}{ll}
\norms{x^k - x^{\star}}_{x^{\star}} &\overset{\eqref{eq:local_norm_prop1}}{\leq} \frac{\norms{x^k - x^{\star}}_{x^k}}{1 - \norms{x^k - x^{\star}}_{x^k}} \vspace{1ex}\\
& \overset{\eqref{bound_lambda_by_bar_lambda_est2}}{\leq}  \frac{\bar{\gamma}_k(1 -2\bar{\gamma}_k + 2\bar{\gamma}_k^2)}{(1 - 2\bar{\gamma}_k)(1 - \bar{\gamma}_k)^2 - \bar{\gamma}_k^2} = h(\bar{\gamma}_k) \vspace{1ex}\\
& \leq h(\gamma_k + \eta_k),
\end{array}
\end{equation*}
which proves \eqref{key_prop8_apdx}.
\end{proof}

Now we can prove Theorem \ref{tm:finite_step_phase1}. We first restate the Theorem.
\begin{theorem}
Let $\omega(\tau) := \tau - \ln(1 + \tau)$. If we choose the parameters as in Algorithm \ref{alg:A1}, then after at most
\begin{equation}\label{eq:k_max_apdx}
K : = \frac{f(x^{0}) - f(x^{\star})}{\delta\omega\left(\frac{1 - 2C_1}{C_1}h^{-1}(\beta)\right)}.
\end{equation}
outer iterations of the damped-step scheme, we can guarantee that $\gamma_k + \eta_k \leq h^{-1}(\beta)$ for some $k \in [K]$, which implies that $\norms{x^{k} - x^{\star}}_{x^{\star}} \leq \beta$. 
Moreover, the total number of LO calls is at most 
\begin{equation*}
\Tc_1 := \frac{6D_{\Xc}^2\lambda_{\max}(\nabla^2 f(x^0))}{(C_1h^{-1}(\beta))^2}\frac{1 - (1-\delta)^{K+1}}{\delta(1-\delta)^K},
\end{equation*}
where $D_{\Xc} := \displaystyle\max_{x,y\in \Xc}\norms{x-y}$.
\end{theorem}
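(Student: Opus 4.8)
The plan is to bound separately the number $K$ of damped outer iterations and the total number of inner Frank--Wolfe (LMO) calls accumulated over them.

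First I would bound $K$. Whenever a damped step is taken the switching test must fail, so $\gamma_k + \eta_k > h^{-1}(\beta)$; since in this stage $\eta_k$ is frozen at $\eta_0 \le C_1 h^{-1}(\beta)$, this forces $\gamma_k > (1-C_1)h^{-1}(\beta)$, and in particular $\gamma_k > \eta_k$ (using $C_1 < \tfrac12$). Substituting these facts into the per-iteration decrease \eqref{key_prop7_apdx} of Lemma~\ref{lm:one_iter_obj_value}, and using that $\tfrac{\gamma_k^2 - \eta_k^2}{\gamma_k} = \gamma_k - \tfrac{\eta_k^2}{\gamma_k}$ is increasing in $\gamma_k$, I would lower bound the argument of $\omega$ by a fixed positive multiple of $h^{-1}(\beta)$, namely the quantity $\tfrac{1-2C_1}{C_1}h^{-1}(\beta)$ appearing in \eqref{eq:k_max_apdx}. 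Because $\omega$ is nonnegative and increasing, each damped step then decreases $f$ by at least $\delta\omega\!\big(\tfrac{1-2C_1}{C_1}h^{-1}(\beta)\big)$; telescoping and using $f(x^k) \ge f(x^\star)$ gives $K\,\delta\omega(\cdot) \le f(x^0) - f(x^\star)$, which is exactly \eqref{eq:k_max_apdx}. The implication $\norms{x^k - x^\star}_{x^\star} \le \beta$ is then immediate from Lemma~\ref{lm:bound_lambda_by_bar_lambda}: once $\gamma_k + \eta_k \le h^{-1}(\beta)$, monotonicity of $h$ yields $\norms{x^k - x^\star}_{x^\star} \le h(\gamma_k + \eta_k) \le h(h^{-1}(\beta)) = \beta$.

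Next I would bound the LMO calls, which needs two ingredients. The first is to track the curvature of the subproblems across outer iterations. Since each damped step obeys $\norms{x^{k+1} - x^k}_{x^k} = \alpha_k\gamma_k < \delta < 1$ (Lemma~\ref{lm:one_iter_obj_value}), the self-concordance comparison \eqref{eq:local_norm_prop1} gives a bound of the form $\nabla^2 f(x^{k+1}) \preceq (1-\delta)^{-1}\nabla^2 f(x^k)$, and iterating yields $\lambda_{\max}(\nabla^2 f(x^k)) \le (1-\delta)^{-k}\lambda_{\max}(\nabla^2 f(x^0))$. The second ingredient is the sublinear $\BigO{1/t}$ convergence of the customized Frank--Wolfe subroutine (Algorithm~\ref{alg:A2}) applied to the quadratic $\psi$ with Hessian $H = \nabla^2 f(x^k)$: its curvature constant is at most $\lambda_{\max}(\nabla^2 f(x^k))\,D_{\Xc}^2$, so the standard best-iterate Frank--Wolfe gap bound shows that the stopping test $V_t \le \eta_k^2$ is met after at most $\tfrac{6 D_{\Xc}^2\lambda_{\max}(\nabla^2 f(x^k))}{\eta_k^2}$ inner iterations, each using one LMO. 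Combining this with the Hessian bound and the frozen accuracy $\eta_k \equiv \eta_0$, the cost of outer iteration $k$ is $\BigO{(1-\delta)^{-k} D_{\Xc}^2 \lambda_{\max}(\nabla^2 f(x^0))/\eta_0^2}$, and the factor $(C_1 h^{-1}(\beta))^2$ in the denominator of $\Tc_1$ enters through the choice $\eta_0 = \min\{\beta/C,\, C_1 h^{-1}(\beta)\}$.

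Summing this over $k = 0, \dots, K$ reduces to the geometric series $\sum_{k=0}^{K} (1-\delta)^{-k} = \tfrac{1-(1-\delta)^{K+1}}{\delta(1-\delta)^K}$, which produces exactly the claimed $\Tc_1$; and since each outer iteration invokes $\nabla f(x^k)$ and $\nabla^2 f(x^k)$ once, both of those counts equal $K$. The main obstacle is the LMO bookkeeping: the accuracy $\eta_k$ is held fixed while the subproblem curvature grows geometrically like $(1-\delta)^{-k}$, so the per-outer cost grows and the total is dominated by the final iterations. The delicate points are keeping the self-concordance Hessian comparison valid (which needs $\alpha_k\gamma_k<1$, guaranteed by $\alpha_k\gamma_k<\delta$) and pinning down the constant in the Frank--Wolfe gap bound for the inner quadratic; by contrast, the bound on $K$ is a routine telescoping argument once the argument of $\omega$ has been lower bounded.
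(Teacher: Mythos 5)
Your proposal is correct and follows essentially the same route as the paper's proof: the per-iteration decrease from Lemma~\ref{lm:one_iter_obj_value} telescoped against $f(x^0)-f(x^\star)$ to bound $K$, Lemma~\ref{lm:bound_lambda_by_bar_lambda} with monotonicity of $h$ for the $\beta$-ball implication, the Frank--Wolfe gap bound of Lemma~\ref{lm:bound_sub_solver} for the per-outer-iteration LMO count, and the self-concordant Hessian growth $\lambda_{\max}(\nabla^2 f(x^k)) \le (1-\delta)^{-k}\lambda_{\max}(\nabla^2 f(x^0))$ summed geometrically. One small caveat you share with the paper: strict monotonicity of $\gamma \mapsto \gamma - \eta_k^2/\gamma$ at $\gamma_k > (1-C_1)h^{-1}(\beta)$ actually yields the lower bound $\tfrac{1-2C_1}{1-C_1}h^{-1}(\beta)$ rather than $\tfrac{1-2C_1}{C_1}h^{-1}(\beta)$, which only changes the constant in $K$.
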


\begin{proof}
Notice that in Algorithm \ref{alg:A1}, we always choose $\eta_k = C_1h^{-1}(\beta)$ in the damped-step stage. 
Clearly, if $\gamma_k + \eta_k > h^{-1}(\beta)$, then $\gamma_k > h^{-1}(\beta) - \eta_k = (1 -C_1)h^{-1}(\beta)$, where $C_1 \in  (0,0.5)$.
Therefore, we have
\begin{equation*}
\frac{\gamma_k^2 - \eta_k^2}{\gamma_k} \geq \frac{((1 -C_1)h^{-1}(\beta))^2 - (C_1h^{-1}(\beta))^2}{C_1h^{-1}(\beta)} = \frac{1 - 2C_1}{C_1}h^{-1}(\beta).
\end{equation*}
Using Lemma \ref{lm:one_iter_obj_value} and the monotonicity of $\omega$ we also have
\begin{equation*}
f(x^{k+1}) \overset{\eqref{key_prop7_apdx}}{\leq} f(x^{k}) - \delta\omega(\frac{\gamma_k^2 - \eta_k^2}{\gamma_k}) \leq f(x^k) - \delta\omega(\frac{1 - 2C_1}{C_1}h^{-1}(\beta)).
\end{equation*}
Consequently, we  need at most $K := \frac{f(x^{0}) - f(x^{\star})}{\delta\omega\left(\frac{1 - 2C_1}{C_1}h^{-1}(\beta)\right)}
$ outer iterations to get $\gamma_k + \eta_k \leq h^{-1}(\beta)$ as stated in \eqref{eq:k_max_apdx}.
 
From Lemma \ref{lm:bound_sub_solver}, we can show that the number of LO calls needed at the $k$-th outer iteration is $T_k := \frac{6\lambda_{\max}(\nabla^2f(x^k))D_{\Xc}^2}{\eta_k^2}$. 
Since $f$ is self-concordant, we have
\begin{equation*}
\nabla^2 f(x^{k+1}) \leq \frac{\nabla^2 f(x^{k})}{1 - \norms{x^{k+1} - x^{k}}_{x^k}} = \frac{\nabla^2 f(x^{k})}{1 - \alpha_k\gamma_k} \leq \frac{\nabla^2 f(x^{k})}{1 - \delta},
\end{equation*}
which implies that $\nabla^2 f(x^{k}) \leq \frac{\nabla^2 f(x^{0})}{(1 - \delta)^k}$.
Hence, the total number of LO calls can be computed by
\begin{equation*}
\Tc_1 := \sum_{k=0}^KT_k =  6D_{\Xc}^2\sum_{k = 0}^{K} \frac{\lambda_{\max}(\nabla^2 f(x^k))}{\eta_k^2} \leq \frac{6D_{\Xc}^2}{(C_1h^{-1}(\beta))^2}\sum_{k = 0}^{K} \frac{\lambda_{\max}(\nabla^2 f(x^0))}{(1-\delta)^k} = \frac{6D_{\Xc}^2\lambda_{\max}(\nabla^2 f(x^0))}{(C_1h^{-1}(\beta))^2}\frac{1 - (1-\delta)^{K+1}}{\delta(1-\delta)^K}.
\end{equation*}
Finally, if $\gamma_k + \eta_k \leq h^{-1}(\beta)$, then we have $\lambda_k \overset{\eqref{key_prop8_apdx}}{\leq} h(\gamma_k + \eta_k) \leq h(h^{-1}(\beta)) = \beta$.
\end{proof}

\beforesubsec
\subsection{The Proof of Theorem \ref{tm:linear_convergence}}\label{apdx:proof_linear_convergence}
\aftersubsec

The  following lemma shows that $\norms{x^{k+1} - x^{\star}}_{x^{\star}}$ and $\norms{x^{k+1} - x^k}_{x^k}$ can both be bounded by $\norms{x^{k} - x^{\star}}_{x^{\star}}$ when $\norms{x^{k} - x^{\star}}_{x^{\star}}$ is sufficiently small. 

\begin{lemma}\label{lm:one_iter_ineq}
Suppose that $\lambda_k := \norms{x^k - x^{\star}}_{x^{\star}} \leq \beta$, where $\beta \in (0, 1)$ is chosen by Algorithm \ref{alg:A1}. 
Then, we have
\begin{equation}\label{key_prop1_apdx}
\lambda_{k+1} \leq \frac{\eta_k}{1-\lambda_k} + \frac{\lambda_k^2}{(1 - \lambda_k)^2(1-2\lambda_k)}.
\end{equation}
In addition, we can also bound $\norms{x^{k+1} - x^k}_{x^k}$ as follows:
\begin{equation}\label{key_prop2_apdx}
\norms{x^{k+1} - x^k}_{x^k} \leq \eta_k + \frac{\lambda_k^2}{(1-2\lambda_k)(1-\lambda_k)} + \frac{\lambda_k}{1 - \lambda_k}.
\end{equation}
\end{lemma}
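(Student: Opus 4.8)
The plan is to prove both bounds by routing the error through the exact projected Newton point $T(x^k)$ and measuring everything in local norms, exploiting that in the full-step stage $x^{k+1}=z^k$ is an $\eta_k$-solution of the subproblem \eqref{eq:sub_problem} at $x^k$. Throughout I would freely pass between the two local norms via \eqref{eq:local_norm_prop1}, using the first inequality to get $\norms{x^k-x^{\star}}_{x^k}\le\tfrac{\lambda_k}{1-\lambda_k}=:d$ and the second inequality to convert an $x^k$-norm into an $x^{\star}$-norm at the cost of a factor $(1-\lambda_k)^{-1}$.

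First, for \eqref{key_prop1_apdx}, I would split $x^{k+1}-x^{\star}=(z^k-T(x^k))+(T(x^k)-x^{\star})$ and apply the triangle inequality in $\norms{\cdot}_{x^{\star}}$. The first term is pure subproblem inexactness: Lemma~\ref{lm:inexact_sol_prop} gives $\norms{z^k-T(x^k)}_{x^k}\le\eta_k$, and converting to the $x^{\star}$-norm via the second form of \eqref{eq:local_norm_prop1} produces exactly $\tfrac{\eta_k}{1-\lambda_k}$.

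The heart of the argument is bounding $\norms{T(x^k)-x^{\star}}_{x^{\star}}$, i.e. the exact projected Newton contraction. Writing the optimality condition \eqref{eq:opt_cond1} for $T(x^k)$ tested at $x^{\star}$, together with the variational inequality for $x^{\star}$ tested at $T(x^k)$, and adding the two, I would obtain
\[
\iprods{\nabla f(x^{\star})-\nabla f(x^k)-\nabla^2 f(x^k)(T(x^k)-x^k),\; T(x^k)-x^{\star}}\ge 0 .
\]
Inserting $\pm\,\nabla^2 f(x^k)(x^{\star}-x^k)$ and moving the $\nabla^2 f(x^k)(T(x^k)-x^{\star})$ piece across turns the left-hand side into $\norms{T(x^k)-x^{\star}}_{x^k}^2$, and Cauchy--Schwarz in the pair $(\norms{\cdot}_{x^k},\norms{\cdot}_{x^k}^{\ast})$ then gives
\[
\norms{T(x^k)-x^{\star}}_{x^k}\le \norms{\nabla f(x^{\star})-\nabla f(x^k)-\nabla^2 f(x^k)(x^{\star}-x^k)}_{x^k}^{\ast}\le \frac{d^2}{1-d},
\]
the last step being the standard self-concordant Newton-residual estimate with $d=\norms{x^k-x^{\star}}_{x^k}$. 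Converting back to the $x^{\star}$-norm (another factor $(1-\lambda_k)^{-1}$), substituting $d\le\tfrac{\lambda_k}{1-\lambda_k}$, and using monotonicity of $t\mapsto t^2/(1-t)$, the whole expression collapses to $\tfrac{\lambda_k^2}{(1-\lambda_k)^2(1-2\lambda_k)}$; adding the first term proves \eqref{key_prop1_apdx}.

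For \eqref{key_prop2_apdx} I would instead use the three-term split $x^{k+1}-x^k=(z^k-T(x^k))+(T(x^k)-x^{\star})+(x^{\star}-x^k)$ and bound each summand directly in $\norms{\cdot}_{x^k}$: the first by $\eta_k$ (Lemma~\ref{lm:inexact_sol_prop}), the second by $\tfrac{d^2}{1-d}=\tfrac{\lambda_k^2}{(1-\lambda_k)(1-2\lambda_k)}$ from the residual estimate above, and the third by $d\le\tfrac{\lambda_k}{1-\lambda_k}$; no conversion to the $x^{\star}$-norm is needed, so this bound is cleaner. I expect the main obstacle to be getting the constants to match the stated bounds exactly: one must use the \emph{sharper} residual estimate $d^2/(1-d)$ rather than the looser $\big(d/(1-d)\big)^2$ form, and must apply the correct direction of \eqref{eq:local_norm_prop1} at each step (the inequality controlled by $\lambda_k=\norms{x^k-x^{\star}}_{x^{\star}}$ when passing into the $x^{\star}$-norm, and the one controlled by $\norms{x^k-x^{\star}}_{x^k}$ when bounding $d$), since that is precisely what yields the clean $(1-\lambda_k)$ and $(1-2\lambda_k)$ factors.
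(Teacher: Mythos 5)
Your proposal is correct and follows essentially the same route as the paper: the same decomposition through $T(x^k)$, the same local-norm conversions costing a factor $(1-\lambda_k)^{-1}$, and the same self-concordant Newton-residual bound $\norms{\nabla f(x^{\star})-\nabla f(x^k)-\nabla^2 f(x^k)(x^{\star}-x^k)}_{x^k}^{\ast}\le d^2/(1-d)$. The only cosmetic difference is that you obtain $\norms{T(x^k)-x^{\star}}_{x^k}\le\norms{\nabla f(x^{\star})-\nabla f(x^k)-\nabla^2 f(x^k)(x^{\star}-x^k)}_{x^k}^{\ast}$ by adding the two variational inequalities and applying Cauchy--Schwarz, whereas the paper phrases the same step as nonexpansiveness of the $\nabla^2 f(x^k)$-metric projection; these are the same argument.
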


\begin{proof}
Since we always choose full-step $\alpha_k = 1$, we have $x^{k+1} = z^{k+1}$. 
Therefore, $\norms{x^{k+1} - T(x^k)}_{x^k} = \norms{z^k - T(x^k)}_{x^k} \leq \eta_k$, which leads to
\begin{equation}\label{eq:one_iter_ineq_est1}
\begin{array}{ll}
\lambda_{k+1} &= \norms{x^{k+1} - x^{\star}}_{x^{\star}} \leq \norms{x^{k+1} - T(x^k)}_{x^{\star}} +  \norms{T(x^k) - x^{\star}}_{x^{\star}} \vspace{1ex}\\
&\overset{\eqref{eq:local_norm_prop1}}{\leq} \frac{\norms{x^{k+1} - T(x^k)}_{x^k}}{1 - \norms{x^k - x^{\star}}_{x^{\star}}} + \frac{\norms{T(x^k) - x^{\star}}_{x^k}}{1 - \norms{x^k - x^{\star}}_{x^{\star}}} \vspace{1ex}\\
& \leq \frac{\eta_k}{1 - \lambda_k} + \frac{\norms{T(x^k) - x^{\star}}_{x^k}}{1 - \lambda_k}.
\end{array}
\end{equation}
This proves \eqref{key_prop1_apdx}.

Now, we bound $\norms{T(x^k) - x^{\star}}_{x^k}$ as follows. 
Firstly, the optimality conditions of \eqref{eq:sub_problem} and \eqref{eq:constr_cvx} can be written as
\begin{equation*}
\left\{
\begin{array}{ll}
\iprods{\nabla f(x^k) + \nabla^2f(x^k)(T(x^k) - x^k), x - T(x^k)} \geq 0, ~~\forall x \in \Xc, \vspace{1ex}\\
\iprods{\nabla f(x^{\star}), x - x^{\star}} \geq 0,~~\forall x\in\Xc. \vspace{1ex}\\
\end{array}
\right.
\end{equation*}
This can be rewritten equivalently to
\begin{equation}\label{eq:one_iter_ineq_est2}
\left\{
\begin{array}{ll}
\iprods{\nabla^2f(x^k)[T(x^k) - (x^k - \nabla^2f(x^k)^{-1}\nabla f(x^k))], x - T(x^k)} \geq 0, ~~\forall x \in \Xc, \vspace{1ex}\\
\iprods{\nabla^2f(x^k)[x^{\star} - (x^{\star} - \nabla^2f(x^k)^{-1}\nabla f(x^{\star}))], x - x^{\star}} \geq 0,~~\forall x\in\Xc. \vspace{1ex}\\
\end{array}
\right.
\end{equation}
Similar to the proof of \cite{Bauschke2011}[Theorem 3.14], we can show that \eqref{eq:one_iter_ineq_est2} is equivalent to
\begin{equation}\label{eq:one_iter_ineq_est3}
\left\{
\begin{array}{rl}
  T(x^k) &= \proj_{\Xc}^{\nabla^2f(x^k)}\left(x^k - \nabla^2f(x^k)^{-1}\nabla f(x^k)\right), \vspace{1ex}\\
  x^{\star} &= \proj_{\Xc}^{\nabla^2f(x^k)}\left(x^{\star} - \nabla^2f(x^k)^{-1}\nabla f(x^{\star})\right). \vspace{1ex}\\
\end{array}
\right.
\end{equation}
Using the nonexpansiveness of the projection operator we can derive
\begin{equation}\label{eq:one_iter_ineq_est4}
\begin{array}{ll}
\norms{T(x^k) - x^{\star}}_{x^k} &\overset{\eqref{eq:one_iter_ineq_est3}}{=} \Big\Vert \proj_{\Xc}^{\nabla^2f(x^k)}\left(x^k - \nabla^2f(x^k)^{-1}\nabla f(x^k)\right)  -  \proj_{\Xc}^{\nabla^2f(x^k)}\left(x^{\star} - \nabla^2f(x^k)^{-1}\nabla f(x^{\star})\right)\Big\Vert_{x^k} \vspace{1ex}\\
&\leq \norms{x^k - x^{\star} - \nabla^2f(x^k)^{-1}(\nabla f(x^k) - \nabla f(x^{\star}))}_{x^k} \vspace{1ex}\\
& = \norms{\nabla f(x^{\star}) - \nabla f(x^k) - \nabla^2f(x^k)(x^{\star} - x^k)}_{x^k}^{\ast} \vspace{1ex}\\
& \leq \frac{\norms{x^{\star} - x^k}_{x^k}^2}{1 - \norms{x^{\star} - x^k}_{x^k}} \vspace{1ex}\\
& \overset{\eqref{eq:local_norm_prop1}}{\leq} \frac{\norms{x^{\star} - x^k}_{x^{\star}}^2}{(1 - 2\norms{x^{\star} - x^k}_{x^{\star}})(1 - \norms{x^{\star} - x^k}_{x^{\star}})} \vspace{1ex}\\
& = \frac{\lambda_k^2}{(1-2\lambda_k)(1-\lambda_k)},
\end{array}
\end{equation}
where the second last inequality is from \cite{TranDinh2016c}[Theorem 1].
Plugging \eqref{eq:one_iter_ineq_est4} into \eqref{eq:one_iter_ineq_est1}, we get \eqref{key_prop1_apdx}.

Finally, we note that
\begin{equation*}
\begin{array}{ll}
\norms{x^{k+1} - x^k}_{x^k} &\leq \norms{x^{k+1} - T(x^k)}_{x^k} + \norms{x^{\star} - T(x^k)}_{x^k} + \norms{x^k - x^{\star}}_{x^k}\vspace{1ex}\\
&\overset{\eqref{eq:one_iter_ineq_est4}}\leq \norms{x^{k+1} - T(x^k)}_{x^k} + \frac{\lambda_k^2}{(1-2\lambda_k)(1-\lambda_k)} + \norms{x^k - x^{\star}}_{x^k}\vspace{1ex}\\
&\overset{\eqref{eq:local_norm_prop1}}{\leq} \eta_k + \frac{\lambda_k^2}{(1-2\lambda_k)(1-\lambda_k)} + \frac{\norms{x^k - x^{\star}}_{x^{\star}}}{1 - \norms{x^k - x^{\star}}_{x^{\star}}} \vspace{1ex}\\
&= \eta_k + \frac{\lambda_k^2}{(1-2\lambda_k)(1-\lambda_k)} + \frac{\lambda_k}{1 - \lambda_k},
\end{array}
\end{equation*}
which proves \eqref{key_prop2_apdx}.
\end{proof}

Now we can prove Theorem \ref{tm:linear_convergence}. We first restate the Theorem.
\begin{theorem}
Suppose $\norms{x^0 - x^{\star}}_{x^{\star}} \leq \beta$ and the triple $(\sigma, \beta, C)$ satisfies the following condition:
\begin{equation}\label{eq:condition_para_apdx}
\left\{\begin{array}{l}
  \sigma \in (0,1), ~~\beta \in (0, 0.5), ~~C > 1 \vspace{1ex}\\
  \frac{1}{C(1-\beta)} + \frac{\beta}{(1-2\beta)(1-\beta)^2}\leq \sigma, \vspace{1ex}\\
  \frac{1}{C} + \frac{1}{(1-2\beta)}\leq 2.
\end{array}\right.
\end{equation}
In addition, we set $\eta_k := \frac{\beta\sigma^k}{C}$ and update $\{x^k\}$ by full-step in Algorithm \ref{alg:A1}. 
Then, for $k\geq 0$, we have 
\begin{equation}\label{key_prop3_apdx}
\norms{x^k - x^{\star}}_{x^{\star}} \leq \beta\sigma^k ~~~~\text{and}~~~~\norms{x^{k+1} - x^{k}}_{x^{k}} \leq 2\beta\sigma^k.
\end{equation}
\end{theorem}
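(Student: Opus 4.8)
The plan is to prove the two bounds in \eqref{key_prop3_apdx} simultaneously by induction on $k$, where the first inequality $\norms{x^k - x^{\star}}_{x^{\star}} \leq \beta\sigma^k$ is the load-bearing one that propagates the recursion, and the second is an immediate byproduct at each fixed step. The base case $k=0$ is exactly the standing hypothesis $\norms{x^0 - x^{\star}}_{x^{\star}} \leq \beta = \beta\sigma^0$. For the inductive step I would assume $\lambda_k := \norms{x^k - x^{\star}}_{x^{\star}} \leq \beta\sigma^k$ and invoke Lemma \ref{lm:one_iter_ineq}, which applies because $\lambda_k \leq \beta\sigma^k \leq \beta < \tfrac12$ keeps every denominator below strictly positive.

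First I would bound $\lambda_{k+1}$ using \eqref{key_prop1_apdx}. The whole trick is to extract exactly \emph{one} surviving factor of $\sigma^k$ from each term while bounding the leftover factors by constants via $\lambda_k \leq \beta$. In $\eta_k/(1-\lambda_k)$ I substitute $\eta_k = \beta\sigma^k/C$ and replace $1-\lambda_k$ by $1-\beta$; in the quadratic term I split $\lambda_k^2 = \lambda_k\cdot\lambda_k$, keeping $\beta\sigma^k$ from one factor and $\beta$ from the other, and replace the remaining $(1-\lambda_k)$ and $(1-2\lambda_k)$ by $(1-\beta)$ and $(1-2\beta)$ using monotonicity on $(0,\tfrac12)$. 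This yields
\begin{equation*}
\lambda_{k+1} \leq \beta\sigma^k\left[\frac{1}{C(1-\beta)} + \frac{\beta}{(1-2\beta)(1-\beta)^2}\right],
\end{equation*}
and the bracket is precisely the left-hand side of the second condition in \eqref{eq:condition_para_apdx}, hence $\leq \sigma$. Therefore $\lambda_{k+1} \leq \beta\sigma^{k+1}$, closing the induction on the first bound.

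For the second bound I would feed the now-established $\lambda_k \leq \beta\sigma^k$ into \eqref{key_prop2_apdx} and again factor out $\beta\sigma^k$ from the three terms. The key simplification is the algebraic identity
\begin{equation*}
\frac{\beta}{(1-2\beta)(1-\beta)} + \frac{1}{1-\beta} = \frac{\beta + (1-2\beta)}{(1-2\beta)(1-\beta)} = \frac{1}{1-2\beta},
\end{equation*}
which collapses the quadratic and linear terms into a single clean expression. After this one gets $\norms{x^{k+1}-x^k}_{x^k} \leq \beta\sigma^k\big(\tfrac{1}{C} + \tfrac{1}{1-2\beta}\big)$, and the third condition in \eqref{eq:condition_para_apdx} forces the parenthesized quantity to be $\leq 2$, delivering $\norms{x^{k+1}-x^k}_{x^k} \leq 2\beta\sigma^k$.

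The computations are routine rational-function manipulations, since the genuine analytic content is already packaged in Lemma \ref{lm:one_iter_ineq}; thus the real difficulty is upstream, in the perturbation and nonexpansiveness estimates behind \eqref{key_prop1_apdx} and \eqref{key_prop2_apdx}. Within this proof the only step demanding care is the bookkeeping that isolates the \emph{single} factor of $\sigma^k$ (which must survive to produce the geometric decay) from the factors bounded by constants through $\lambda_k \leq \beta$; mismatching these would either destroy the contraction rate or yield a bracket that fails to match the hypotheses. I would also double-check that the conditions \eqref{eq:condition_para_apdx} are reverse-engineered so that the two brackets above coincide exactly with their left-hand sides, which is what lets the induction close tightly rather than with slack.
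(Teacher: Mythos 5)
Your proposal is correct and follows essentially the same route as the paper's proof: induction with base case $k=0$, the bound \eqref{key_prop1_apdx} combined with the second condition in \eqref{eq:condition_para_apdx} to close the induction on $\lambda_k$, and the bound \eqref{key_prop2_apdx} with the identity $\frac{\beta}{(1-2\beta)(1-\beta)} + \frac{1}{1-\beta} = \frac{1}{1-2\beta}$ and the third condition for the step-length estimate. Nothing is missing.
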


\begin{proof}
We prove this theorem by induction.
Firstly, we have $\norms{x^0 - x^{\star}}_{x^{\star}} \leq \beta\sigma^0 = \beta  < 1$ by assumption. 
Next, suppose that $\lambda_k := \norms{x^k - x^{\star}}_{x^{\star}} \leq \beta\sigma^k$ for $k \geq 0$.
We can derive
\begin{equation*}
\begin{array}{ll}
\lambda_{k+1} &:= \norms{x^{k+1} - x^{\star}}_{x^{\star}} \vspace{1ex}\\
& \overset{\eqref{key_prop1_apdx}}{\leq} \frac{\eta_k}{1-\lambda_k} + \frac{\lambda_k^2}{(1 - \lambda_k)^2(1-2\lambda_k)} \vspace{1ex}\\
&= \frac{\beta\sigma^k}{C(1-\lambda_k)} + \frac{\lambda_k^2}{(1 - \lambda_k)^2(1-2\lambda_k)} \vspace{1ex}\\
&\leq \left(\frac{1}{C(1-\lambda_k)} + \frac{\lambda_k}{(1 - \lambda_k)^2(1-2\lambda_k)}\right)\beta\sigma^k ~~(\text{by induction})\vspace{1ex}\\
&\leq \left(\frac{1}{C(1-\beta)} + \frac{\beta}{(1 - \beta)^2(1-2\beta)}\right)\beta\sigma^k ~~(\text{by induction})\vspace{1ex}\\
&\overset{\eqref{eq:condition_para_apdx}}{\leq} \beta\sigma^{k+1},
\end{array}
\end{equation*}
which proves the first estimate of \eqref{key_prop3_apdx}.

Similarly, we also have
\begin{equation*}
\begin{array}{ll}
\norms{x^{k+1} - x^k}_{x^k} &\overset{\eqref{key_prop2_apdx}}{\leq} \eta_k + \frac{\lambda_k^2}{(1-2\lambda_k)(1-\lambda_k)} + \frac{\lambda_k}{1 - \lambda_k} \vspace{1ex}\\
&= \frac{\beta\sigma^k}{C} + \frac{\lambda_k^2}{(1-2\lambda_k)(1-\lambda_k)} + \frac{\lambda_k}{1 - \lambda_k} \vspace{1ex}\\
&\leq \left(\frac{1}{C} + \frac{\lambda_k}{(1 - \lambda_k)(1-2\lambda_k)} + \frac{1}{1 - \lambda_k} \right)\beta\sigma^k ~~(\text{by induction})\vspace{1ex}\\
&\leq \left(\frac{1}{C} + \frac{\beta}{(1 - \beta)(1-2\beta)} + \frac{1}{1 - \beta} \right)\beta\sigma^k ~~(\text{by induction})\vspace{1ex}\\
&= \left(\frac{1}{C} + \frac{1}{(1 - 2\beta)} \right)\beta\sigma^k \vspace{1ex}\\
& \overset{\eqref{eq:condition_para_apdx}}{\leq} 2\beta\sigma^{k},
\end{array}
\end{equation*}
which proves the second estimate of \eqref{key_prop3_apdx}.
\end{proof}

\beforesubsec
\subsection{The proof of Theorem \ref{tm:complex_analysis}}\label{apdx:proof_complex_analysis}
\aftersubsec

Firstly, the following lemma establishes the sublinear convergence rate of the Frank-Wolfe gap in each outer iteration.
\begin{lemma}\label{lm:bound_sub_solver}
At the $k$-th outer iteration of Algorithm~\ref{alg:A1}, if we run the Frank-Wolfe subroutine  \eqref{alg:outer_iter_full_step} to update $u^t$, then, after $T_k$ iterations, we have
\begin{equation}\label{key_prop4_apdx}
\min_{t = 1,\cdots, T_k} V_k(u^t) \leq \frac{6\lambda_{\max}(\nabla^2f(x^k))D_{\Xc}^2}{T_k + 1},
\end{equation}
where $V_k(u^t) := \max_{u\in\Xc}\iprod{\nabla f(x^k) + \nabla^2f(x^k)(u^t-x^k), u^t - u}$. 
As a result, the number of LO calls at the $k$-th outer iteration of Algorithm \ref{alg:A1} is at most $O_k := \frac{6\lambda_{\max}(\nabla^2f(x^k))D_{\Xc}^2}{\eta_k^2}$.
\end{lemma}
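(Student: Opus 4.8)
The plan is to treat the subroutine \textbf{FW} as the classical Frank--Wolfe method applied to the convex quadratic $\psi(u) := \iprod{h, u - u^0} + \tfrac{1}{2}\iprod{H(u-u^0), u-u^0}$ with $h = \nabla f(x^k)$, $H = \nabla^2 f(x^k)$, and $u^0 = x^k$, so that $\nabla\psi(u^t) = \nabla f(x^k) + \nabla^2 f(x^k)(u^t - x^k)$ and $V_k(u^t)$ is exactly the Frank--Wolfe gap $\iprod{\nabla\psi(u^t), u^t - v^t}$. Since $\psi$ is quadratic it is $L_k$-smooth with $L_k := \lambda_{\max}(\nabla^2 f(x^k))$, and $\Xc$ is compact with diameter $D := D_{\Xc}$. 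First I would record the exact one-step identity $\psi(u^t + \tau(v^t - u^t)) = \psi(u^t) - \tau V_k(u^t) + \tfrac{\tau^2}{2}\delta_t$ with $\delta_t := \norms{v^t - u^t}_H^2 \le L_k D^2$ (the bound holds since $v^t,u^t\in\Xc$). Because $\tau_t = \min\{1, V_k(u^t)/\delta_t\}$ is precisely the minimizer of this one-dimensional quadratic over $[0,1]$, comparing with the ``safe'' step $\bar\tau := \min\{1, V_k(u^t)/(L_kD^2)\}$ yields the descent
\[
\psi(u^t) - \psi(u^{t+1}) \ge \tfrac12\min\Big\{V_k(u^t),\ \tfrac{V_k(u^t)^2}{L_kD^2}\Big\}.
\]

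Next, writing $h_t := \psi(u^t) - \psi^{\star}$ for the primal gap (the minimum $\psi^{\star}$ is attained since $\Xc$ is compact) and using convexity in the form $V_k(u^t) \ge h_t$, the descent inequality gives the standard recursion $h_{t+1} \le h_t - \tfrac{h_t^2}{2L_kD^2}$ once $h_t \le L_kD^2$, with $h_{t+1}\le \tfrac12 h_t$ in the preceding ``large-gap'' phase. A routine induction then produces the $\BigO{1/t}$ primal rate $h_t \le \tfrac{c_0 L_kD^2}{t+1}$ for an absolute constant $c_0$.

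The main obstacle --- and the only nonroutine step --- is converting this primal rate into a bound on the \emph{gap} $\min_t V_k(u^t)$, since the primal value $h_t$ alone does not control $V_k(u^t)$ pointwise. For this I would telescope the descent inequality over the second half of the run, $t = \lceil T_k/2\rceil, \dots, T_k$ (all of which lie in the regime $V_k(u^t)\le L_kD^2$ once $T_k$ is large), which gives $\big(\min_t V_k(u^t)\big)^2 \le \tfrac{2L_kD^2\, h_{\lceil T_k/2\rceil}}{T_k/2}$; substituting the primal bound for $h_{\lceil T_k/2\rceil}$ and tidying the window length and constants yields the claimed $\min_{1\le t\le T_k} V_k(u^t) \le \tfrac{6L_kD^2}{T_k+1}$. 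Finally, since \textbf{FW} halts as soon as some $V_k(u^t)\le \eta_k^2$ and each iteration issues exactly one linear-oracle call, imposing $\tfrac{6L_kD^2}{T_k+1}\le \eta_k^2$ shows that at most $O_k = \tfrac{6\lambda_{\max}(\nabla^2 f(x^k))D_{\Xc}^2}{\eta_k^2}$ LMO calls are needed, establishing the concluding ``as a result'' claim.
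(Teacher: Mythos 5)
Your proposal is correct and reaches the same conclusion by the same overall strategy: view the subroutine as classical Frank--Wolfe on the quadratic model, use $L_k := \lambda_{\max}(\nabla^2 f(x^k))$-smoothness and the diameter $D_{\Xc}$, invoke the standard primal rate $\phi_k(u^t)-\phi_k^{\star} \le \BigO{L_kD_{\Xc}^2/t}$, and then convert that primal rate into a bound on $\min_t V_k(u^t)$. The only real divergence is in the conversion step. The paper compares the line-search iterate against the fixed step $\tfrac{2}{t+1}$, multiplies the resulting inequality by $t$, and sums over all $t=1,\dots,T_k$ with an Abel-summation cancellation; you instead use the actual line-search step $\tau_t=\min\{1,V_t/\delta_t\}$ to get the descent $\psi(u^t)-\psi(u^{t+1})\ge \tfrac12\min\{V_t, V_t^2/(L_kD_{\Xc}^2)\}$ and telescope it over the second half of the run. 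Both are standard routes to the $\BigO{1/T_k}$ gap rate; yours is slightly more economical (with the Jaggi constant $2$ it yields roughly $4L_kD_{\Xc}^2/(T_k+1)$, better than the stated $6$), at the price of the caveat you already flag: you must verify that the iterations in your window satisfy $V_k(u^t)\le L_kD_{\Xc}^2$ (which follows from $V_k(u^t)\le h_t+\tfrac12\delta_t$ and the primal rate for $t\ge 3$), and handle the few smallest values of $T_k$ separately, whereas the paper's weighted sum applies uniformly from $t=1$. The concluding LMO count is argued identically in both.
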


\begin{proof}
Let $\phi_k(u) = \iprod{\nabla f(x^k), u - x^k} + 1/2\iprod{\nabla^2 f(x^k)(u - x^k), u - x^k}$ and $\set{u^t}$ be the sequence generated by the Frank-Wolfe subroutine \eqref{alg:outer_iter_full_step}. 
It is well-known that (see \cite{Jaggi2013}[Theorem 1])
\begin{equation}\label{eq:bound_sub_solver_est1}
\phi_k(u^t) - \phi_k^{\star} \leq \frac{2\lambda_{\max}(\nabla^2 f(x^k))D_{\Xc}^2}{t + 1}.
\end{equation}
Let $v^t := \arg\min_{u \in  \Xc}\{\iprod{\nabla \phi_k(u^t), u}\}$. 
Notice that
\begin{equation*}
\begin{array}{ll}
\phi_k(u^{t+1}) &= \min_{\tau \in [0,1]}\{\phi_k((1-\tau)u^t + \tau v^t)\} \leq \phi_k\left(\left(1 - \frac{2}{t+1}\right)u^t + \frac{2}{t+1}v^t\right) \vspace{1ex}\\
&\leq \phi_k(u^t) + \frac{2}{t+1}\iprod{\nabla \phi_k(u^t), (v^t - u^t)} + \frac{\lambda_{\max}(\nabla^2f(x^k))}{2}\left(\frac{2}{t+1}\right)^2\norms{v^t - u^t}^2 \vspace{1ex}\\
&\leq \phi_k(u^t) - \frac{2}{t+1}V_k(u^t) + \frac{2\lambda_{\max}(\nabla^2f(x^k))}{(t+1)^2}D_{\Xc}^2.  
\end{array}
\end{equation*}
This is equivalent to
\begin{equation}\label{eq:bound_sub_solver_est2}
tV_k(u^t) \leq \frac{t(t+1)}{2}\left(\phi_k(u^t) - \phi_k(u^{t+1})\right) + \frac{t\lambda_{\max}(\nabla^2f(x^k))}{t+1}D_{\Xc}^2.
\end{equation}
Summing  up this inequality from $t = 1$ to $T_k$, we get
\begin{equation*}
\begin{array}{ll}
\displaystyle\frac{T_k(T_k+1)}{2}\min_{t = 1,\cdots, T_k}\set{V_k(u^t)} &\leq \displaystyle\sum_{t = 1}^{T_k} tV_k(u^t) \vspace{1ex}\\
&\overset{\eqref{eq:bound_sub_solver_est2}}{\leq} \displaystyle\sum_{t = 1}^{T_k} t\phi_k(u^t) - \frac{T_k(T_k + 1)}{2}\phi_k(u^{T_k}) + T_k \lambda_{\max}(\nabla^2f(x^k))D_{\Xc}^2 \vspace{1ex}\\
&\leq \displaystyle\sum_{t = 1}^{T_k} t(\phi_k(u^t) - \phi_k^{\star}) + T_k \lambda_{\max}(\nabla^2f(x^k))D_{\Xc}^2 \vspace{1ex}\\
& \overset{\eqref{eq:bound_sub_solver_est1}}{\leq} 3T_k \lambda_{\max}(\nabla^2f(x^k))D_{\Xc}^2,
\end{array}
\end{equation*}
which implies \eqref{key_prop4_apdx}.
\end{proof}

Now we can prove Theorem \ref{tm:complex_analysis}. We first restate the Theorem.
\begin{theorem}
Suppose that $\norms{x^{0} - x^{\star}}_{x^{\star}} \leq \beta$. 
If we choose the parameters $\beta$, $\sigma$, $C$, and $\{\eta_k\}$ as in Theorem \ref{tm:linear_convergence} and update $\set{x^k}$ by the full-steps, then to obtain an $\varepsilon$-solution $x_{\varepsilon}^{\star}$ defined by \eqref{def:inexact_sol_prob}, it requires 
\begin{equation*}
\left\{\begin{array}{ll}
\left\lfloor\frac{\ln(\varepsilon)}{\ln(\sigma)}\right\rfloor + 1 = \BigO{\ln(\varepsilon^{-1})}~~\text{gradient evaluations $\nabla{f}(x^k)$}, \vspace{1ex}\\
\left\lfloor\frac{\ln(\varepsilon)}{\ln(\sigma)}\right\rfloor + 1 = \BigO{\ln(\varepsilon^{-1})}~~\text{Hessian evaluations $\nabla^2{f}(x^k)$, and} \vspace{1ex}\\
\left\lfloor \frac{3C^2\lambda_{\max}(\nabla^2{f}(x^0)) D_{\Xc}^2}{(1-2\beta)\beta^3\sigma^2} \cdot \varepsilon^{-2\nu}\right\rfloor = \BigO{\varepsilon^{-2\nu}} ~~\text{LO calls, with $\nu := 1 + \frac{\ln(1-2\beta)}{\ln(\sigma)}$}.
\end{array}\right.
\end{equation*}
\end{theorem}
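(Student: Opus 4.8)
The plan is to read off everything from the linear-convergence guarantee of Theorem~\ref{tm:linear_convergence} together with the per-iteration oracle count of Lemma~\ref{lm:bound_sub_solver}, treating the gradient/Hessian count and the LMO count separately. First I would fix the number of outer (projected-Newton) iterations. Since each full-step outer iteration evaluates $\nabla f(x^k)$ once and accesses $\nabla^2 f(x^k)$ only through Hessian-vector products inside the \textbf{FW} subroutine, the gradient and Hessian counts both equal the number of outer iterations $K$. By Theorem~\ref{tm:linear_convergence}, $\norms{x^k - x^{\star}}_{x^{\star}} \le \beta\sigma^k \le \sigma^k$ (as $\beta < 1$), so the stopping criterion $\norms{x^k - x^{\star}}_{x^{\star}} \le \varepsilon$ is met once $\sigma^k \le \varepsilon$, i.e. $k \ge \ln(\varepsilon)/\ln(\sigma)$ (both logarithms are negative, so the ratio is positive). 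Hence $K := \lfloor \ln(\varepsilon)/\ln(\sigma)\rfloor + 1$ iterations suffice, which directly yields the two $\BigO{\ln(\varepsilon^{-1})}$ bounds.

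For the LMO count the total is $\sum_{k=0}^{K-1} O_k$ with $O_k := \frac{6\lambda_{\max}(\nabla^2 f(x^k))D_{\Xc}^2}{\eta_k^2}$ from Lemma~\ref{lm:bound_sub_solver} and $\eta_k = \beta\sigma^k/C$, so $O_k = \frac{6C^2\lambda_{\max}(\nabla^2 f(x^k))D_{\Xc}^2}{\beta^2\sigma^{2k}}$. The first real task is to control the growth of $\lambda_{\max}(\nabla^2 f(x^k))$, which is where self-concordance enters. Using the Hessian bound \eqref{eq:local_norm_prop1}, namely $\nabla^2 f(x^{k+1}) \preceq (1 - \norms{x^{k+1}-x^k}_{x^k})^{-2}\nabla^2 f(x^k)$, together with the second estimate $\norms{x^{k+1}-x^k}_{x^k} \le 2\beta\sigma^k \le 2\beta$ of Theorem~\ref{tm:linear_convergence}, I would telescope to get $\lambda_{\max}(\nabla^2 f(x^k)) \le (1-2\beta)^{-2k}\lambda_{\max}(\nabla^2 f(x^0))$; here $\beta < 1/2$ guarantees $1-2\beta \in (0,1)$ so the bound is well-defined. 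Substituting gives $O_k \le A\,\big(\sigma^{-2}(1-2\beta)^{-2}\big)^k$ with $A := 6C^2\lambda_{\max}(\nabla^2 f(x^0))D_{\Xc}^2/\beta^2$.

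Summing this geometric series, whose ratio $\sigma^{-2}(1-2\beta)^{-2}$ exceeds $1$, bounds the total by a constant multiple of its last term $\big(\sigma^{-2}(1-2\beta)^{-2}\big)^K$. The crux is then converting this power of $K$ into a power of $\varepsilon$: from $K \le \ln(\varepsilon)/\ln(\sigma) + 1$ one obtains $\sigma^{-2K} \le \sigma^{-2}\varepsilon^{-2}$, and via the identity $(1-2\beta)^{-2K} = (\sigma^{-K})^{2\ln(1-2\beta)/\ln(\sigma)}$ one gets $(1-2\beta)^{-2K} \le (1-2\beta)^{-2}\varepsilon^{-2\ln(1-2\beta)/\ln(\sigma)}$. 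Multiplying these two factors produces precisely $\varepsilon^{-2(1 + \ln(1-2\beta)/\ln(\sigma))} = \varepsilon^{-2\nu}$, which is exactly the origin of the exponent $\nu$. Collecting the constants and the geometric-sum denominator then delivers the stated $\BigO{\varepsilon^{-2\nu}}$ LMO bound.

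I expect the main obstacle to be the LMO count rather than the (routine) iteration count. The subtlety is that one must track two competing geometric rates simultaneously: the $\sigma^k$ decay of the Newton residual, which shrinks the inner tolerance $\eta_k$ and thereby \emph{inflates} the per-iteration FW cost, and the $(1-2\beta)^{-2k}$ growth of $\lambda_{\max}(\nabla^2 f(x^k))$ forced by self-concordance. Seeing that their product, after the change of variables $\sigma^K \approx \varepsilon$, collapses into the single clean exponent $2\nu$ is the key insight. Matching the precise leading constant $\frac{3C^2\lambda_{\max}(\nabla^2 f(x^0))D_{\Xc}^2}{(1-2\beta)\beta^3\sigma^2}$ is afterwards only careful bookkeeping of the geometric-series and floor/ceiling factors, simplified using the parameter conditions \eqref{eq:condition_para}.
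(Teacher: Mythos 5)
Your proposal is correct and follows essentially the same route as the paper's proof: the same self-concordance-based telescoping bound $\lambda_{\max}(\nabla^2 f(x^k)) \leq (1-2\beta)^{-2k}\lambda_{\max}(\nabla^2 f(x^0))$, the same per-iteration LMO count from Lemma~\ref{lm:bound_sub_solver} with $\eta_k = \beta\sigma^k/C$, the same geometric-series summation, and the same exponent identity converting $((1-2\beta)\sigma)^{-2K}$ into $\varepsilon^{-2\nu}$. The only cosmetic difference is that you bound the sum by a constant multiple of its last term before substituting $K$, whereas the paper substitutes the iteration count directly into the closed-form geometric sum; both yield the stated constant.
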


\begin{proof}
By self-concordance of $f$, using \cite{Nesterov2004}[Theorem 4.1.6], it holds that
\begin{equation*}
\nabla^2 f(x^{k+1}) \preceq \frac{1}{(1 - \norms{x^{k+1} - x^k}_{x^k})^2} \nabla^2 f(x^k) \overset{\eqref{key_prop3_apdx}}{\preceq} \frac{1}{(1 - 2\beta\sigma^k)^2} \nabla^2 f(x^k) \preceq \frac{1}{(1 - 2\beta)^2} \nabla^2 f(x^k).
\end{equation*}
By induction, we have
\begin{equation*}
\nabla^2 f(x^{k}) \preceq \left(\frac{1}{1 - 2\beta}\right)^{2k} \nabla^2 f(x^0).
\end{equation*}
Therefore, we can bound the maximum eigenvalue of $\nabla^2{f}(x^k)$ as
\begin{equation}\label{complex_analysis_est1}
\lambda_{\max}(\nabla^2 f(x^{k})) \leq \left(\frac{1}{1 - 2\beta}\right)^{2k} \lambda_{\max}(\nabla^2 f(x^0)).
\end{equation}
Let us denote by $\bar{\lambda}_0 := \lambda_{\max}(\nabla^2 f(x^0))$.
Then, fromLemma \ref{lm:bound_sub_solver}, we can see that the number of LO calls at the $k$-th outer iteration is at most
\begin{equation}\label{complex_analysis_est2}
\mathcal{O}_k := \frac{6\lambda_{\max}(\nabla^2 f(x^k))D_{\Xc}^2}{\eta_k^2} \overset{\eqref{complex_analysis_est1}}{\leq} \frac{6\bar{\lambda}_0D_{\Xc}^2}{(1 - 2\beta)^{2k}\eta_k^2} = \frac{6C^2\bar{\lambda}_0D_{\Xc}^2}{\beta^2((1 - 2\beta)\sigma)^{2k}},
\end{equation}
where the last equality holds because we set $\eta_k := \beta\sigma^k/C$ in Theorem \ref{tm:linear_convergence}.

To obtain an $\varepsilon$-solution $x^k$ defined by \eqref{def:inexact_sol_prob}, we need to impose $\beta\sigma^k \leq \varepsilon$ (recall that $\norms{x^k - x^{\star}}_{x^{\star}} \leq \beta\sigma^k$ by Theorem \ref{tm:linear_convergence}), which is equivalent to $k \geq \frac{\ln(\beta/\varepsilon)}{\ln(1/\sigma)}$. 
Since $\beta \in (0, 1)$, the outer iteration number is at most $\frac{\ln(1/\varepsilon)}{\ln(1/\sigma)} = \frac{\ln(\varepsilon)}{\ln(\sigma)}$. 
This number is also the total number of gradient and Hessian evaluations.

Finally, using \eqref{complex_analysis_est2}, the total number of LO calls in the entire algorithm can be estimated as
\begin{equation*}
\begin{array}{lll}
\Tc_2 &:= \sum_{k = 0}^{\frac{\ln(\varepsilon)}{\ln(\sigma)}} \frac{6C^2\bar{\lambda}_0 D_{\Xc}^2}{\beta^2((1 - 2\beta)\sigma)^{2k}}  &= \frac{6C^2\bar{\lambda}_0 D_{\Xc}^2}{\beta^2} \sum_{k = 0}^{\frac{\ln(\varepsilon)}{\ln(\sigma)}} \left(\frac{1}{(1 - 2\beta)\sigma}\right)^{2k} \vspace{1ex}\\
&\leq  \frac{3C^2\bar{\lambda}_0 D_{\Xc}^2}{(1-2\beta)\beta^3\sigma^2}\left(\frac{1}{(1 - 2\beta)\sigma}\right)^{\frac{2\ln(\varepsilon)}{\ln(\sigma)}}  &= \frac{3C^2\bar{\lambda}_0 D_{\Xc}^2}{(1-2\beta)\beta^3\sigma^2}\left(\frac{1}{\varepsilon}\right)^{2\left(1 + \frac{\ln(1-2\beta)}{\ln(\sigma)}\right)},
\end{array}
\end{equation*}
where the last equality holds since $\tau^{\alpha\ln(s)} = s^{\alpha\ln(\tau)}$.
\end{proof}

\beforesubsec
\subsection{The Proof of  Theorem \ref{tm:complex_analysis_obj_value}}\label{apdx:proof_complex_analysis_obj_value}
\aftersubsec
The following lemma states that we can bound $f(x^k) - f^{\star}$ by $\norms{x^{k+1} - x^k}_{x^k}$ and $\norms{x^{k} - x^{\star}}_{x^{\star}}$. 
Therefore, from the convergence rate of $\norms{x^{k+1} - x^k}_{x^k}$ and $\norms{x^{k} - x^{\star}}_{x^{\star}}$ in Theorem \ref{tm:linear_convergence}, we can obtain a convergence rate of $\set{f(x^k) - f^{\star}}$.

\begin{lemma}\label{lm:bound_obj_value}
Let $\gamma_k := \norms{x^{k+1} - x^k}_{x^k} = \norms{z^{k} - x^k}_{x^k}$ and  $\lambda_k := \norms{x^{k} - x^{\star}}_{x^{\star}}$. 
Suppose that $x^0\in\dom{f}\cap\Xc$. 
If $0 < \gamma_k,\lambda_k, \lambda_{k+1} < 1$, then we have
\begin{equation}\label{key_prop5_apdx}
f(x^{k+1}) \leq f(x^{\star}) + \frac{\gamma_k^2(\gamma_k + \lambda_k)}{1 - \gamma_k} + \eta_k^2 + \omega_{\ast}(\lambda_{k+1}),	
\end{equation}
where $\omega_{\ast}(\tau) := -\tau - \ln(1-\tau)$.
\end{lemma}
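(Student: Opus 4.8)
The plan is to bound $f(x^{k+1}) - f(x^{\star})$ by inserting $f(x^k)$ as an intermediate reference point and controlling the two gaps $f(x^{k+1}) - f(x^k)$ and $f(x^k) - f(x^{\star})$ separately using the standard self-concordant inequalities stated in the excerpt. The key tools are the upper bound from self-concordance, $f(y) \leq f(x) + \iprods{\nabla f(x), y - x} + \omega_{\ast}(\norms{y-x}_x)$ (already used as \cite{Nesterov2004}[Theorem 4.1.8] in the proof of Lemma~\ref{lm:one_iter_obj_value}), and the corresponding lower bound $f(x) \geq f(x^{\star}) + \iprods{\nabla f(x^{\star}), x - x^{\star}} + \omega(\norms{x - x^{\star}}_{x^{\star}})$ applied at the optimum.

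First I would write $f(x^{k+1}) - f(x^{\star}) = \big(f(x^{k+1}) - f(x^k)\big) + \big(f(x^k) - f(x^{\star})\big)$. For the first gap, since $x^{k+1} = z^k$ in the full-step regime, I would reuse the argument in \eqref{one_iter_obj_value_est1}--\eqref{one_iter_obj_value_est2}: the $\eta_k$-solution property gives $\iprods{\nabla f(x^k), z^k - x^k} \leq \eta_k^2 - \gamma_k^2$, so that $f(x^{k+1}) \leq f(x^k) + \eta_k^2 - \gamma_k^2 + \omega_{\ast}(\gamma_k)$ with $\gamma_k = \norms{x^{k+1}-x^k}_{x^k}$. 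For the second gap I would bound $f(x^k) - f(x^{\star})$ from above; the natural route is to use convexity together with the Cauchy--Schwarz inequality $\iprods{\nabla f(x^k), x^k - x^{\star}} \leq \norms{\nabla f(x^k)}_{x^k}^{\ast}\norms{x^k - x^{\star}}_{x^k}$ and the near-optimality of $x^k$, converting everything into the quantities $\gamma_k$, $\lambda_k$, and the local-norm conversion factors from \eqref{eq:local_norm_prop1}.

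The step I expect to be the main obstacle is obtaining the mixed term $\frac{\gamma_k^2(\gamma_k+\lambda_k)}{1-\gamma_k}$ cleanly, since this requires relating the dual local norm of the gradient residual at $x^k$ to both $\gamma_k$ (the step length, which encodes how far the Newton subproblem solution moved) and $\lambda_k$ (the distance to the optimum). I anticipate needing the estimate $\norms{\nabla f(x^k) + \nabla^2 f(x^k)(z^k - x^k)}_{x^k}^{\ast}$-type bounds that tie the inexact subproblem residual to $\eta_k$, combined with the self-concordant gradient-difference bound $\norms{\nabla f(x^k) - \nabla f(x^{\star}) - \nabla^2 f(x^k)(x^k - x^{\star})}_{x^k}^{\ast} \leq \frac{\lambda_k^2}{1-\lambda_k}$ already invoked via \cite{TranDinh2016c}[Theorem 1] in \eqref{eq:one_iter_ineq_est4}. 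The factor $1/(1-\gamma_k)$ strongly suggests that a local-norm conversion of the form \eqref{eq:local_norm_prop1} between the norms at $x^k$ and at $x^{k+1}$ is applied to push the Newton decrement $\gamma_k$ through one iterate; I would track the algebra carefully so that the residual $\eta_k^2$ and the optimum-side correction $\omega_{\ast}(\lambda_{k+1})$ land exactly as in \eqref{key_prop5_apdx}, and finally combine with the linear rates $\gamma_k \leq 2\beta\sigma^k$ and $\lambda_k \leq \beta\sigma^k$ from Theorem~\ref{tm:linear_convergence} to read off the $\BigO{\sigma^{2k}}$ decay of the objective residual and hence the $\BigO{\epsilon^{-\nu}}$ LMO complexity.
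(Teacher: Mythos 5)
Your decomposition through $f(x^k)$ is not the paper's route, and as sketched it has a genuine gap in its second half. The paper never splits off $f(x^{k+1})-f(x^k)$: it applies the self-concordant upper bound directly between $x^{\star}$ and $x^{k+1}$, which is the sole source of the $\omega_{\ast}(\lambda_{k+1})$ term, then uses monotonicity of the gradient to replace $\iprods{\nabla f(x^{\star}),\,x^{k+1}-x^{\star}}$ by $\iprods{\nabla f(x^{k+1}),\,x^{k+1}-x^{\star}}$, and finally splits \emph{that} inner product into the Newton-model part $\iprods{\nabla f(x^k)+\nabla^2 f(x^k)(x^{k+1}-x^k),\,x^{k+1}-x^{\star}}$ plus the Taylor remainder. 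The model part is bounded by $\eta_k^2$ by testing the inexact optimality condition \eqref{eq:inexact_sol_sub_prob} at $u=x^{\star}$, and the remainder is bounded via Cauchy--Schwarz and \cite{TranDinh2016c}[Theorem 1] by $\frac{\gamma_k^2(\gamma_k+\lambda_k)}{1-\gamma_k}$.

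The step in your plan that fails is the bound on $f(x^k)-f(x^{\star})$ by convexity plus $\iprods{\nabla f(x^k),x^k-x^{\star}}\le \norms{\nabla f(x^k)}_{x^k}^{\ast}\norms{x^k-x^{\star}}_{x^k}$. Problem \eqref{eq:constr_cvx} is constrained, so $\nabla f(x^{\star})\neq 0$ in general and $\norms{\nabla f(x^k)}_{x^k}^{\ast}$ stays bounded away from zero; this Cauchy--Schwarz estimate therefore yields a term of order $\lambda_k=\BigO{\sigma^k}$ rather than $\BigO{\sigma^{2k}}$, which destroys exactly the quadratic improvement of the objective residual over the distance that Lemma \ref{lm:bound_obj_value} exists to establish (and on which the $\BigO{\epsilon^{-\nu}}$ rather than $\BigO{\epsilon^{-2\nu}}$ LMO count in Theorem \ref{tm:complex_analysis_obj_value} rests). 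The cure is the one the paper uses: the first-order term must be paired with the variational inequality of the quadratic subproblem tested at $x^{\star}$, which cancels the non-vanishing component of the gradient along feasible directions and leaves only $\eta_k^2$. Your first half (reusing \eqref{one_iter_obj_value_est2} for $f(x^{k+1})-f(x^k)$) is valid but also produces $\omega_{\ast}(\gamma_k)$ rather than $\omega_{\ast}(\lambda_{k+1})$, so even with a corrected second half the pieces would not reassemble into \eqref{key_prop5_apdx} without further work.
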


\begin{proof}
Firstly, from \cite{Nesterov2004}[Theorem 4.1.8], we have
\begin{equation*}
f(x^{k+1}) \leq f(x^{\star}) + \iprod{\nabla f(x^{\star}), x^{k+1} - x^{\star}} + \omega_{*}(\norms{x^{k+1} - x^{\star}}_{x^{\star}}),
\end{equation*}
provided that $\norms{x^{k+1} - x^{\star}}_{x^{\star}} < 1$.
Next, using $\iprod{\nabla f(x^{\star}) - \nabla f(x^{k+1}), x^{k+1} - x^{\star}} \leq 0$, we can further derive
\begin{equation}\label{bound_obj_value_est1}
f(x^{k+1}) \leq f(x^{\star}) + \iprod{\nabla f(x^{k+1}), x^{k+1} - x^{\star}} + \omega_{*}(\norms{x^{k+1} - x^{\star}}_{x^{\star}}).
\end{equation}
Now, we bound $\iprod{\nabla f(x^{k+1}), x^{k+1} - x^{\star}}$ as follows. 
Notice that this term can be decomposed as
\begin{equation*}
\begin{array}{ll}
\iprod{\nabla f(x^{k+1}), x^{k+1} - x^{\star}} &= \underbrace{\iprod{\nabla f(x^k) + \nabla^2 f(x^k)(x^{k+1} - x^k), x^{k+1} - x^{\star}}}_{\Tc_1}\vspace{2ex}\\
&+ \underbrace{\iprod{\nabla f(x^{k+1}) - \nabla f(x^k) - \nabla^2 f(x^k)(x^{k+1} - x^k), x^{k+1} - x^{\star}}}_{\Tc_2}. \vspace{1ex}\\
\end{array}
\end{equation*}
Since $x^{k+1}$ is an $\eta^k$-solution of \eqref{eq:sub_problem} at $x = x^k$, we have	
\begin{equation}\label{bound_obj_value_est2}
\Tc_1 = \iprod{\nabla f(x^k) + \nabla^2 f(x^k)(x^{k+1} - x^k), x^{k+1} - x^{\star}} \leq \eta_k^2.
\end{equation}
Using the Cauchy-Schwarz inequality and the triangle inequality, $\Tc_2$ can also be bounded as  
\begin{equation}\label{bound_obj_value_est3}
\begin{array}{ll}
\Tc_2 &= \iprod{\nabla f(x^{k+1}) - \nabla f(x^k) - \nabla^2 f(x^k)(x^{k+1} - x^k), x^{k+1} - x^{\star}} \vspace{1ex}\\
&\leq \norms{\nabla f(x^{k+1}) - \nabla f(x^k) - \nabla^2 f(x^k)(x^{k+1} - x^k)}_{x^k}^*\norms{x^{k+1} - x^{\star}}_{x^k} \vspace{1ex}\\
&\leq \frac{\norms{x^{k+1} - x^k}_{x^k}^2}{1 - \norms{x^{k+1} - x^k}_{x^k}}\norms{x^{k+1} - x^{\star}}_{x^k} \vspace{1ex}\\
&\leq \frac{\norms{x^{k+1} - x^k}_{x^k}^2}{1 - \norms{x^{k+1} - x^k}_{x^k}}\left[ \norms{x^{k} - x^{\star}}_{x^k} + \norms{x^{k+1} - x^{k}}_{x^k}\right] \vspace{1ex}\\
& = \frac{\gamma_k^2(\gamma_k + \lambda_k)}{1 - \gamma_k},
\end{array}
\end{equation}
where, the second inequality is due to \cite{TranDinh2016c}[Theorem 1]. 
Finally, we can bound $f(x^{k+1}) - f^{\star}$ as
\begin{equation*} 
\begin{array}{rl}
f(x^{k+1}) - f(x^{\star}) & \overset{\eqref{bound_obj_value_est1}}{\leq} \iprod{\nabla f(x^{k+1}), x^{k+1} - x^{\star}} + \omega_{*}(\lambda_{k+1}) \vspace{1ex}\\
& = \Tc_1 + \Tc_2 + \omega_{*}(\lambda_{k+1}) \vspace{1ex}\\
& \overset{\eqref{bound_obj_value_est2}\eqref{bound_obj_value_est3}}{\leq} \eta_k^2 + \frac{\gamma_k^2(\gamma_k + \lambda_k)}{1 - \gamma_k} + \omega_{*}(\lambda_{k+1}),
\end{array}
\end{equation*}
which proves \eqref{key_prop5_apdx}.
\end{proof}

Now we can prove Theorem \ref{tm:complex_analysis_obj_value}. We first restate the Theorem.
\begin{theorem}
Suppose that $\norms{x^{0} - x^{\star}}_{x^{\star}} \leq \beta$. 
If we choose $\sigma$, $\beta$, $C$, and $\{\eta_k\}$ as in Theorem \ref{tm:linear_convergence} and update $\{x^k\}$ by the full-steps, then we have
\begin{equation}
f(x^{k+1}) - f(x^{\star}) \leq \left(\frac{12\beta^3}{1-2\beta} + \frac{\beta^2}{C^2} + \beta^2\right)\sigma^{2k}.
\end{equation}
Consequently, the total LO complexity of Algorithm~\ref{alg:A1} to achieve an $\varepsilon$-solution $x_{\varepsilon}^{\star}$ such that $f(x_{\varepsilon}^{\star}) - f^{\star} \leq \varepsilon$ is $\BigO{\varepsilon^{-\nu}}$, where $\nu := 1 + \frac{\ln(1-2\beta)}{\ln(\sigma)}$.
\end{theorem}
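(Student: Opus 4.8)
The statement has two parts: first an explicit rate for the objective residual $f(x^{k+1}) - f(x^\star)$, and then a conversion of that rate into an LMO complexity bound. The key tool will be Lemma~\ref{lm:bound_obj_value}, which bounds $f(x^{k+1}) - f(x^\star)$ in terms of the three quantities $\gamma_k := \norms{x^{k+1}-x^k}_{x^k}$, $\lambda_k := \norms{x^k - x^\star}_{x^\star}$, $\eta_k$, and $\omega_\ast(\lambda_{k+1})$. The plan is therefore to plug the linear-convergence estimates from Theorem~\ref{tm:linear_convergence} into that bound.

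\textbf{Deriving the rate.} From Theorem~\ref{tm:linear_convergence} I have $\lambda_k \leq \beta\sigma^k$ and $\gamma_k = \norms{x^{k+1}-x^k}_{x^k} \leq 2\beta\sigma^k$, and by construction $\eta_k = \frac{\beta\sigma^k}{C}$. First I would substitute these into the three terms of \eqref{key_prop5_apdx}. For the first term $\frac{\gamma_k^2(\gamma_k + \lambda_k)}{1-\gamma_k}$, using $\gamma_k \leq 2\beta\sigma^k$ and $\lambda_k \leq \beta\sigma^k$ the numerator is at most $(2\beta\sigma^k)^2 \cdot 3\beta\sigma^k = 12\beta^3\sigma^{3k}$; for the denominator I would bound $1 - \gamma_k \geq 1 - 2\beta$ (since $\gamma_k \leq 2\beta\sigma^k \leq 2\beta$ and $\beta < 0.5$), and then crudely absorb the extra $\sigma^k$ to land on $\frac{12\beta^3}{1-2\beta}\sigma^{2k}$. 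The middle term is exactly $\eta_k^2 = \frac{\beta^2}{C^2}\sigma^{2k}$. For the last term I would use the elementary bound $\omega_\ast(\tau) \leq \tau^2$ for small $\tau$ (valid since $\lambda_{k+1} \leq \beta\sigma^{k+1} \leq \beta < 1$), giving $\omega_\ast(\lambda_{k+1}) \leq \lambda_{k+1}^2 \leq \beta^2\sigma^{2k+2} \leq \beta^2\sigma^{2k}$. Summing the three contributions yields the claimed coefficient $\frac{12\beta^3}{1-2\beta} + \frac{\beta^2}{C^2} + \beta^2$.

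\textbf{The complexity conversion.} Once the rate $f(x^{k+1}) - f(x^\star) \leq B\,\sigma^{2k}$ is in hand with $B$ the constant above, I would ask how many outer iterations $k$ are needed so that $B\sigma^{2k} \leq \varepsilon$, i.e. $k \geq \frac{\ln(\varepsilon/B)}{2\ln\sigma} = \BigO{\ln(\varepsilon^{-1})}$. Then I would reuse the per-iteration LMO count $\mathcal{O}_k \leq \frac{6C^2\bar\lambda_0 D_{\Xc}^2}{\beta^2((1-2\beta)\sigma)^{2k}}$ from \eqref{complex_analysis_est2} in the proof of Theorem~\ref{tm:complex_analysis} and sum the geometric series over $k$ up to this threshold. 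The dominant term is the last one, of order $\left(\frac{1}{(1-2\beta)\sigma}\right)^{2k}$ evaluated at $k \sim \frac{\ln(\varepsilon^{-1})}{2\ln(\sigma^{-1})}$, and applying the identity $\tau^{\alpha\ln(s)} = s^{\alpha\ln(\tau)}$ converts this into $\varepsilon^{-\left(1 + \frac{\ln(1-2\beta)}{\ln\sigma}\right)} = \varepsilon^{-\nu}$. The halving of the exponent relative to Theorem~\ref{tm:complex_analysis} (where the target was $\norms{x^k-x^\star}_{x^\star} \leq \varepsilon$ needing $\sigma^k \leq \varepsilon/\beta$) is exactly what produces $\varepsilon^{-\nu}$ instead of $\varepsilon^{-2\nu}$: here the objective residual scales like $\sigma^{2k}$, so the required iteration count is halved.

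\textbf{Anticipated obstacle.} The rate derivation itself is routine substitution; the only delicate point is justifying the bound $\omega_\ast(\lambda_{k+1}) \leq \lambda_{k+1}^2$ cleanly and making sure the $\sigma^k \leq 1$ absorptions are done consistently so that every term genuinely collapses to $\sigma^{2k}$ rather than leaving stray $\sigma^{3k}$ factors. The more structural subtlety is the complexity accounting: I must be careful that summing $\mathcal{O}_k$ over the geometric range $((1-2\beta)\sigma)^{-2}$ indeed converges to a bound dominated by its final term (which requires $(1-2\beta)\sigma < 1$, true since both factors are in $(0,1)$), and that the exponent bookkeeping via $\tau^{\alpha\ln s} = s^{\alpha\ln\tau}$ is applied to the correct iteration threshold $k = \frac{\ln\varepsilon}{2\ln\sigma}$ rather than the one from Theorem~\ref{tm:complex_analysis}. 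Getting that factor-of-two right is the crux of distinguishing $\varepsilon^{-\nu}$ from $\varepsilon^{-2\nu}$.
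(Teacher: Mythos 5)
Your proposal is correct and follows essentially the same route as the paper's proof: both plug the bounds $\lambda_k \leq \beta\sigma^k$, $\gamma_k \leq 2\beta\sigma^k$, $\eta_k = \beta\sigma^k/C$ from Theorem~\ref{tm:linear_convergence} into Lemma~\ref{lm:bound_obj_value}, use $\omega_{\ast}(\tau)\leq\tau^2$ on the last term, and then sum the per-iteration LMO count \eqref{complex_analysis_est2} over the halved iteration threshold $k \sim \frac{\ln(\varepsilon^{-1})}{2\ln(\sigma^{-1})}$ to get $\varepsilon^{-\nu}$. Your observation that the factor of two in the exponent of $\sigma^{2k}$ is precisely what converts $\varepsilon^{-2\nu}$ into $\varepsilon^{-\nu}$ matches the paper's accounting exactly (just note that the justification for $\omega_{\ast}(\lambda_{k+1})\leq\lambda_{k+1}^2$ requires $\lambda_{k+1}<0.5$, which holds since $\beta<0.5$ by \eqref{eq:condition_para}).
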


\begin{proof}
It is easy to check that $\omega_{*}(\tau) \leq \tau^2$ for $0 < \tau < 0.5$. 
Therefore, there exists $k_0 > 0$ such that $\omega_{*}(\beta \sigma^k) \leq (\beta \sigma^k)^2$ for $k \geq k_0$. 
Since $\eta_k := \frac{\beta\sigma^k}{C}$, $\gamma_k \leq 2\beta\sigma^k$, and $\lambda_k \leq \beta\sigma^k$ in Theorem \ref{tm:linear_convergence}, for $k\geq k_0$, we have
\begin{equation}
\begin{array}{ll}
f(x^{k+1}) - f(x^{\star}) &\overset{\eqref{key_prop5_apdx}}{\leq} \frac{\gamma_k^2(\gamma_k + \lambda_k)}{1 - \gamma_k} + \eta_k^2 + \omega_{*}(\lambda_{k+1}) \vspace{1ex}\\
& \overset{\eqref{key_prop3_apdx}}{\leq} \frac{12\beta^3\sigma^{3k}}{1-2\beta\sigma^k} + \frac{\beta^2\sigma^{2k}}{C^2} + \omega_{*}(\beta\sigma^{k+1}) \vspace{1ex}\\
&\leq \left(\frac{12\beta^3\sigma^{k}}{1-2\beta\sigma^k} + \frac{\beta^2}{C^2} +\beta^2\sigma^2\right)\sigma^{2k} \vspace{1ex}\\
&\leq \left(\frac{12\beta^3}{1-2\beta} + \frac{\beta^2}{C^2} +\beta^2\right)\sigma^{2k}.
\end{array}
\end{equation}
Let $C_1 > \frac{12\beta^2}{1-2\beta} + \frac{\beta^2}{C^2} +\beta^2$ be a constant. 
To guarantee $f(x^{k+1}) - f(x^{\star}) \leq \varepsilon$, we impose $C_1\sigma^{2k} \leq \varepsilon$ i.e. $k \geq \frac{\ln(\varepsilon/C_1)}{2\ln(\sigma)}$. 
Therefore, the outer iteration number is at most $\frac{\ln(\varepsilon/C_1)}{2\ln(\sigma)} + 1$. 
Using \eqref{complex_analysis_est2}, the total number of LO calls will be
\begin{equation*}
\begin{array}{ll}
\sum_{k = 0}^{\frac{\ln(\varepsilon/C_1)}{2\ln(\sigma)} + 1} \frac{6C^2\lambda_{\max}(\nabla^2 f(x^0))D_{\Xc}^2}{\beta^2((1 - 2\beta)\sigma)^{2k}} &= \BigO{\sum_{k = 0}^{\frac{\ln(\varepsilon/C_1)}{2\ln(\sigma)} + 1} \left(\frac{1}{(1 - 2\beta)\sigma}\right)^{2k}} \vspace{1ex}\\
= \BigO{\left(\frac{1}{(1 - 2\beta)\sigma}\right)^{\frac{\ln(\varepsilon/C_1)}{\ln(\sigma)}}\right) &= \Oc\left(\left(\frac{1}{\varepsilon}\right)^{\frac{\ln((1-2\beta)\sigma)}{\ln(\sigma)}}},
\end{array}
\end{equation*}
where the last equality follows from the fact that $\tau^{\alpha\log(s)} = s^{\alpha\log(\tau)}$.
\end{proof}


\bibliographystyle{plain}




\end{document}